\numberwithin{equation}{section}
\newtheorem{thm}[subsubsection]{Theorem}
\newtheorem{prop}[subsubsection]{Proposition}
\newtheorem{lem}[subsubsection]{Lemma}
\theoremstyle{definition}
\newtheorem{defin}[subsubsection]{Definition}
\newtheorem{rem}[subsubsection]{Remark}
\def\into{{\hookrightarrow}}
\def\oU{{\overline{U}}}
\def\oY{{\overline{Y}}}
\def\oC{{\overline{C}}}
\newcommand{\tilOmega}{{\widetilde{\Omega}}}
\newcommand{\tilomega}{{\widetilde{\omega}}}
\newcommand{\tileta}{{\widetilde{\eta}}}
\newcommand{\tilc}{{\widetilde{c}}}
\newcommand{\tilt}{{\widetilde{t}}}
\newcommand{\tila}{{\widetilde{a}}}
\newcommand{\tilf}{{\widetilde{f}}}
\newcommand{\tilk}{{\widetilde{k}}}
\newcommand{\tilX}{{\widetilde{X}}}
\newcommand{\kcirc}{{k^\circ}}
\newcommand{\kcirccirc}{{k^{\circ\circ}}}
\def\.{{,\dotsc, }}
\newcommand{\val}{{\rm val}}
\newcommand{\chara}{{\rm char}}
\newcommand{\Spec}{{\rm Spec}}
\newcommand{\CCC}{{\mathbb C}}
\newcommand{\RR}{{\mathbb R}}
\newcommand{\ZZ}{{\mathbb Z}}
\newcommand{\PP}{{\mathbb P}}
\newcommand{\GG}{{\mathbb G}}
\newcommand{\CA}{{\mathcal A}}
\newcommand{\CO}{{\mathcal O}}
\newcommand{\CM}{{\mathcal M}}
\newcommand{\oCM}{\overline{\mathcal M}}
\newcommand{\CT}{{\mathcal T}}
\newcommand{\CY}{{\mathcal Y}}
\newcommand{\CC}{{\mathcal C}}
\newcommand{\ft}{{\mathfrak t}}
\newcommand{\fh}{{\mathfrak h}}
\newcommand{\gr}{{\rm gr}}
\newcommand{\Res}{{\rm Res}}
\newcommand{\res}{{\rm res}}
\newcommand{\ord}{{\rm ord}}
\newcommand{\Star}{{\rm Star}}
\newcommand{\op}{{\rm op}}
\newcommand{\divisor}{{\rm div}}
\def\:{\colon}
\def\Br{{\rm Br}}
\def\val{\nu}
\def\b1{\boldsymbol 1}
\def\ko{{k^\circ}}
\begin{document}

\title[Reduction and lifting of differential forms]{Reduction and lifting problem for differential forms on Berkovich curves}
\author{Michael Temkin and Ilya Tyomkin}

\thanks{M.T. was supported by ERC Consolidator Grant 770922 - BirNonArchGeom, I.T. was supported by ISF grant 821/16}

\address{Einstein Institute of Mathematics, The Hebrew University of Jerusalem, Giv'at Ram, Jerusalem, 91904, Israel}
\email{michael.temkin@mail.huji.ac.il}
\address{Department of Mathematics, Ben-Gurion University of the Negev, P.O.Box 653, Be'er Sheva, 84105, Israel}
\email{tyomkin@math.bgu.ac.il}

\begin{abstract}
Given a complete real-valued field $k$ of residue characteristic zero, we study properties of a differential form $\omega$ on a smooth proper $k$-analytic curve $X$. In particular, we associate to $(X,\omega)$ a natural tropical reduction datum combining tropical data of $(X,\omega)$ and algebra-geometric reduction data over the residue field $\tilk$. We show that this datum satisfies natural compatibility condition, and prove a lifting theorem asserting that any compatible tropical reduction datum lifts to an actual pair $(X,\omega)$. In particular, we obtain a short proof of the main result of \cite{BCGGM}.
\end{abstract}

\keywords{Berkovich spaces, stable reduction, differential forms}
\maketitle

\section{Introduction}

\subsection{Motivation}
The motivation for this work is two-fold. A general motivation is to establish one more important instance of tropical reduction on Berkovich curves, while a concrete motivation was to re-interpret in terms of non-archimedean geometry the main result of \cite{BCGGM}, which describes a compactification of a moduli space of curves with a differential form. In particular, we obtain a short proof of \cite[Theorem~1.3]{BCGGM} and suggest a natural interpretation of the global residue condition discovered in \cite{BCGGM}. Our methods are much more algebraic and apply over any complete valuation field $k$ of residual characteristic zero, though we do use analytic geometry over $k$ and algebraicity of proper $k$-analytic curves.

We learned about \cite{BCGGM} from the talks given by Sam Grushevsky at our departments, and we are grateful to Sam for that and for the subsequent discussions.

\subsection{Tropical reduction}
Let us explain first what we mean by tropical reduction of $k$-analytic objects.

\subsubsection{Metrized curve complexes}
Two main invariants of a non-archimedean field $k$ are its residue field $\tilk$ and the group of values $|k^\times|$. Algebraic geometry over $\tilk$ and polyhedral or tropical geometry over $|k^\times|$ are very useful in studying geometry over $k$. There are various constructions (often depending on choices) that associate to a $k$-analytic space $X$ reductions $\tilX$, which are $\tilk$-varieties, and skeletons $\Gamma_X$, which are piecewise linear spaces. In this paper, $X$ is a curve and $\Gamma_X$ is just a metric graph. Naturally, the two types of invariants are related and one may want to combine them into a single invariant of a mixed type. Amini and Baker introduced in \cite{Amini-Baker} metrized curve complexes by placing $\tilk$-curves as vertices of $\Gamma_X$ and identifying the ends of the edges with points of these curves, and we will follow this approach in the paper. We will use the term {\em tropical reduction} to denote constructions mixing reduction with tropical data.

\begin{rem}
A more conceptual way to encode the same information is to enrich a nodal reduction $\tilX$ with tropical information. This is achieved by providing $\tilX$ with a natural log structure induced from the formal model. In fact, this makes $\tilX$ a log smooth curve over $\tilk$ provided with the log structure $(\kcirc\setminus\{0\})/(1+\kcirccirc)$, see \cite[\S5.1.8]{BT20}. It is natural to call such construction {\em log reduction}, as in \cite{BT20}. However, we do not use log geometry in the sequel, and the additional data we consider is still an ad hoc mix of tropical and algebra-geometric data. It is an interesting and important question if it can be interpreted as a meaningful object of log geometry.
\end{rem}

\subsubsection{$p$-covers}
It is natural to ask if tropical reduction of a $k$-analytic curve $X$ extends to finer objects. In \cite{BT20} this problem was solved for finite covers $Y\to X$ of degree $p=\chara(\tilk)$. The reduction data was rather non-trivial, and involved in addition to a finite map of tropical reduction data also a pl function on $\Gamma_Y$ (the different function) and certain meromorphic differential forms at the vertex curves of $\Gamma_Y$. Moreover, \cite{BT20} provides a list of compatibilities between these objects and its main result is a lifting theorem claiming that any such tropical reduction datum can be lifted to an actual $p$-cover $Y\to X$. This indicates that the constructed reduction is ``as fine as possible'' and no ``hidden parameters are left''.

\subsubsection{Differential forms}
The goal of this paper is to solve a similar problem for $X$ provided with a global meromorphic differential form $\omega$: construct a tropical reduction of $(X,\omega)$, describe a list of compatibilities this datum satisfies, and show that the list is complete by proving a lifting theorem -- any compatible tropical reduction datum lifts to an actual pair $(X,\omega)$.

\subsection{Compactifications of $\Omega\CM_g$}\label{compactsec}
Next, we briefly recall some results of \cite{BCGGM}. For simplicity we consider regular differential forms, the meromorphic case is similar, but requires a minimal care for poles. Pairs consisting of a smooth proper curve $X$ with a non-zero regular differential form $\omega\in\Gamma(\Omega_X)$ defined up to a scalar are classified by the projectivized Hodge bundle $\PP\Omega\CM_g$ whose fibers over $\CM_g$ are $\PP^{g-1}$. For a partition  $\mu=(\mu_1\.\mu_n)$ of $2g-2$, let $\PP\Omega\CM_g(\mu)$ be the stratum of pairs $(X,\omega)$ such that the pattern of zeros of $\omega$ is $\mu$. Since $\omega$ is determined up to a scalar by its divisor, the stratum embeds also into the space $\CM_{g,n}$ (modulo the group of symmetries of the partition).

The two embeddings induce two different compactifications: by taking the closure in the natural extension of the projectivized Hodge bundle to $\PP\Omega\oCM_g\to\oCM_g$, where $\Omega$ is replaced by the dualizing sheaf over the nodal curves, and by taking the closure in $\oCM_{g,n}$.  Moreover, it is even more natural to consider the finer compactification $K\CM_{g,\mu}$ refining these two, which is obtained by taking the closure in $\PP\Omega\oCM_{g,n}$. In a sense, one modifies the first compactification by imposing the condition that the pattern of zeros is preserved, i.e., stay in the smooth locus and do not collide. Points of $\PP\Omega\oCM_{g,n}$ are given by nodal curves $X$, smooth points $p_1\.p_n\in X$ and a non-zero section $\omega\in\Gamma(\omega_X)$ up to a multiplicative scalar.

The main result of \cite{BCGGM} describes which points $(X,p,\omega)$ belong to $K\CM_{g,\mu}$. The starting point is that the datum $(X,p,\omega)$ is still not fine enough since $\omega$ may vanish on components of $X$. If $(X,p,\omega)$ is the central fiber over a complex disc and the generic fiber $(X_t,p_t,\omega_t)$ is smooth, then the order of pole $\ell_i$ of $\omega$ along the components $\tilX_i$ of $X$ is a valuable invariant, and after rescaling by $t^{\ell_i}$ one obtains a non-trivial limit $\tilomega_i=t^{\ell_i}\omega|_{\tilX_i}$, which is a differential form on $\tilX_i$ that might have poles at the nodes. This idea goes back to Kontsevich, who also noted that this data satisfies some simple compatibility conditions along the nodes. The function $\ell$ associating $\ell_i$ to $\tilX_i$ is called {\em level function}.

It was proved in \cite{BCGGM} that the datum $(X,p,\ell,\tilomega)$ also satisfies a much less intuitive {\em global residue condition} taking into account the structure of the dual graph of the components of bounded level, and imposing extra vanishing conditions on the residues of $\tilomega_i$ at a given level. The main result \cite[Proposition~3.8]{BCGGM} shows that any datum satisfying the global residue condition extends to a family over a disc. As an immediate corollary one obtains the following characterization of $K\CM_{g,\mu}$: a tuple $(X,p,\omega)$ lies in $K\CM_{g,\mu}$ if and only if it lifts to a compatible data $(X,p,\ell,\tilomega)$ satisfying the global residue condition, see \cite[Theorem~1.3]{BCGGM}.

\begin{rem}
It is not an accident, that points of $K\CM_{g,\mu}$ are described in terms of liftability of a datum $(X,p,\omega)$ to a compatible datum $(X,p,\ell,\tilomega)$. In \cite{BCGGM2}, a finer compactification of $\PP\Omega\CM_{g}$ is constructed, whose points more or less correspond to the data $(X,p,\ell,\tilomega)$ with an additional piece of ``stacky" information, called {\em prong-matching} in {\em loc.cit.} The latter compactification is smooth and has a natural modular interpretation. Its construction is rather complicated, and we do not discuss it in this paper.
\end{rem}

Finally, let us say a few words about the methods of \cite{BCGGM}. The main result is proved over $\CCC$ in two ways. The first one uses plumbing techniques over the punched disc, and it has many similarities with our method. The other one relies on the theory of flat surfaces, and its output is a family over $[0,\varepsilon)$ rather than over a complex disc, it is not related to the sequel.

\subsection{Outline of the paper}
Now, let us explain what is done in this work. The case considered in \cite{BCGGM} corresponds to $k=\widehat{\cup_n\CCC(\!(t^{1/n})\!)}$.

\subsubsection{Tropical reduction of $\omega$}
We work with a nice $k$-analytic curve $X$ (see \S\ref{convsec}) and a meromorphic differential form $\omega$ on $X$. Also, we fix a skeleton $\Gamma$ of $X$ containing all poles and zeros of $\omega$, and we enrich $\Gamma$ to a metrized curve complex $\tilX=(\Gamma,C_x,p_i)$, see \S\ref{sec:metcc} for a precise definition. Note that a tropical reduction of a function $f$ with poles and zeros in $\Gamma$ is nothing but the pl function $\log|f|$ on $\Gamma$ and scaled reductions $\tilf_x$ of $f$ at the vertices $x\in \Gamma$ of type $2$. The latter are meromorphic functions on the reduction components $C_x$, satisfying simple compatibility conditions. This is a very natural construction, that appeared in different settings in \cite[Theorem~5.15]{BPR} and in \cite[\S2.2.1]{T12}.

In \S\ref{sec:dfoac} we introduce a tropical reduction of $\omega$. A natural metrization $\|\ \|$of $\Omega_X$ was constructed in \cite{Tem16}, so we can just imitate the case of a function: we associate to $\omega$ the level function $\ell=\log\|\omega\|$ on $\Gamma$ and scaled reductions $\tilomega_x$ at the vertices of type $2$. In fact, we prefer a slightly more technical but canonical approach -- we consider the so called {\em graded reduction}, so that $\tilomega_x$ lives at the $\ell(x)$-graded piece. This allows us to avoid the choice of a scaling element $c\in k^\times$. Note also that the reductions of $\Omega$ with respect to this norm were computed in \cite{BT20}: $\tilOmega_x=\Omega^{\rm log}_{C_x}$, where $C_x$ is the reduction of $X$ at $x$. So we view $\tilomega_x$ as a meromorphic differential form on $C_x$ but measure its zeros and poles using the log-order.

\begin{rem}
(i) A family $(X_t,\omega_t)$ over a punched complex disc gives rise to a pair $(X,\omega)$ over $k=\widehat{\cup_n\CCC(\!(t^{1/n})\!)}$, and then the tropical reduction datum of $(\ell,\tilomega)$ of $(X,\omega)$ is nothing but the datum $(\ell,\tilomega)$ of \cite{BCGGM}. Also, it was noted in \cite[Theorem~5.2]{MUW} that this construction extends to the case when $k$ is an arbitrary DVF.

(ii) In \cite{BCGGM} and \cite{MUW} one considers the usual order of zeros and poles of $\tilomega_x$, so the compatibility condition along an edge $e$ is that the sum of orders on the two sides of $e$ is $-2$. We use the log order, so in our case the sum is 0, similarly to reduction of functions or sections of any other line bundle.
\end{rem}

Finally, we introduce one more tropical reduction invariant -- {\em the residue function} $\Re_\omega$, which associates to an oriented edge of $\Gamma$ a scalar in ground field $k$. To do so, we show that $\omega$ has a well-defined residue $\Res_\CA(\omega)\in k$ along any oriented annulus $\CA$. This is a rather straightforward generalization of a classical result of Serre, see \cite[Ch. II, \S11]{Serre}. We define $\Re_\omega(e)$ to be $\Res_\CA(\omega)$, where $\CA$ is an oriented annulus whose skeleton is contained in $e$ and has compatible orientation. The compatibilities satisfied by the residue function are also very natural: it changes sign when the orientation of $e$ is changed, it is harmonic at the vertices of type $2$, if $e$ is a leg of $\Gamma$ and $p_l$ is the corresponding point of type $1$ then $\Re_\omega(l)=\res_{p_l}(\omega)$, and if $x\in\Gamma$ is a vertex of type $2$ and $e\in\Star(x)$ then the (graded) reduction of $\Re_\omega(e)$ coincides with the residue of $\tilomega_x$ at the point $p_e\in C_x$ corresponding to the branch $e$. It turns out that the global residue condition of \cite{BCGGM} is an immediate corollary of the above properties of $\Re_\omega$.

\begin{rem}
(i) The function $\Re_\omega\:E(\Gamma)\to k$ is a sort of a tropical datum, but it obtains values in $k$ rather than $\tilk$. We do not know other interesting examples of this form.

(ii) Our tropical reduction datum $(\tilX,\ell,\tilomega,\Re_\omega)$ extends the datum of \cite{BCGGM} by including the residue function $\Re_\omega$. The latter is a sort of a hidden parameter in {\em loc.cit.} responsible for the global residue condition -- the reduction of $\Re_\omega$ can vanish on components of high level, while the global harmonicity of $\Re_\omega$ might force non-trivial restrictions at lower levels.
\end{rem}

\subsubsection{Main results}
Our main results concern with tropical reduction and lifting. In Theorem \ref{thm:tropicalization}, we prove that the tropical reduction $(\tilX,\ell,\tilomega,\Re_\omega)$ satisfies a list of natural compatibility conditions, some of which have been mentioned above. Conversely, Theorem~\ref{thm:lifting} asserts that any compatible datum $(\tilX,\ell,\tilomega,\Re)$ lifts to a pair $(X,\omega)$ over $k$. We shall emphasize that in this paper, we do not consider the question of liftability of purely tropical data, i.e., data consisting only of a tropical curve and a level function. This question was answered in \cite{MUW}, to which we refer the interested reader for details.

\begin{rem}
We want to stress that both $X$ and $\omega$ are constructed in the lifting theorem. This is analogous to the main result of \cite{BT20}, where one lifts an appropriate tropical reduction datum to a $p$-cover $Y\to X$, but neither $Y$ nor $X$ can be fixed. Notice that one cannot expect a compatible datum to be liftable with a prescribed lift $X$ of $\tilX$. Indeed, by \cite[Theorem~1]{KZ03}, there exist components of the moduli space of stratified differentials supported over the hyperelliptic locus of the moduli space of curves. Thus, the reduction $(\tilX,\ell,\tilomega,\Re_\omega)$ of a pair $(X,\omega)$ in such a component can not be lifted to a non-hyperelliptic lift $X'$ of $\tilX$.
\end{rem}

\subsubsection{The methods}
Our method of proving the reduction and lifting theorems for a differential form in the residual characteristic zero case is surprisingly close to the method used in \cite{BT20} to prove analogous results for $p$-covers in the positive residual characteristic case. Everything is based on Proposition~\ref{goodprop}, which classifies differential forms without zeros and poles on an annulus $\CA$. Explicitly, it asserts that there exists a {\em good coordinate} $t$ on $\CA$ such that $\omega$ acquires a binomial form $(c_nt^n+c_0)\frac{dt}t$. Moreover, $c_0$ is the residue of $\omega$ on $\CA$ and $|c_nt^n|=\|\omega\|$ on the skeleton $e_\CA$, so the isomorphism class of $\omega$ is determined by the two invariants: the residue and the norm on the skeleton. Note that this result is a natural generalization of a classical result providing a local description of meromorphic differential forms on Riemann surfaces. The proof is similar, and constructs the required coordinate by a series of approximations. Notice also, that a similar assertion in the complex-analytic setting plays an important role in the proof of the main result in \cite{BCGGM}.

\begin{rem}
The analogous result in \cite{BT20} shows that a $p$-cover $Y\to\CA$ is presented by $y=t^p+c_nt^n$ for an appropriate choice of coordinates on $Y$ and $\CA$, and $|c_nt^n|$ coincides with the different function $\delta_f$ on $e_\CA$. It is also proved by successive approximations, but the computation is subtler, see Theorem~4.3.8 and Corollary~4.3.9 in {\em loc.cit.}
\end{rem}

The most subtle result about the residue function is its harmonicity at a vertex $x\in\Gamma$ of type 2. We prove this by cutting out a star-shaped neighborhood $U$ of $x$, patching it by discs to a smooth proper curve $\oU$, and extending the form into the added discs using its description on the intersection annuli. In this way we obtain a full control on the poles of $\omega$ on $\oU$ and the harmonicity follows from the fact that the residues on the algebraic curve $\oU$ sum up to zero.

Finally, the lifting theorem is proved by lifting forms $\tilomega_x$ to star-shaped neighborhoods $U_x$ and gluing these to a single curve $X$ along annuli using the same classification of forms on the annuli. The only subtle point here is to lift the forms so that the residues at the edges at $v$ are precisely $\Re_\omega(e)\in k$, as an arbitrary lift of $\tilomega_x$ would only give a form with residues having the same graded reduction as $\Re_\omega(e)$'s. We solve this problem, again, by patching $U_x$ to a good reduction curve $\oU_x$ and proving algebraically that the problem of precise lift of residues is solvable for $\oU_x$.

\subsubsection{Torsor of good coordinates}
Existence of good coordinates was basic for all our proofs. We conclude this paper with a detailed study of the set of all such coordinates in \S\ref{sec5}. In particular, we prove that they form a torsor under the group $G_n(\ko)$, where $G_n:=\GG_m\ltimes_n \GG_a$ and $\GG_m$ acts on $\GG_a$ by $\lambda(\mu):=\lambda^{-n}\mu$, which gives rise to an identification of certain torsors in the tropical reduction. This result provides another interpretation of the prong matching of \cite{BCGGM2}, and can be useful for further studies.

\subsubsection{Comparisons}
Our method was inspired by that of \cite{BT20}. Despite the similarity, the lifting and tropical reduction for forms is a bit more straightforward and less technical, than in the case of $p$-covers, though the residue function $\Re_\omega$ provides a new aspect for this type of problems. As for \cite{BCGGM}, we are not familiar enough with the technique of plumbing, but it seems certain that there is a large intersection of ideas between the approaches. It still might be the case that the non-archimedean approach isolates the core problems one has to solve in a sharper way, whence leading to a shorter argument.

\subsubsection{Open questions}
In addition, our approach at least makes it reasonable to ask what happens without the restriction on the characteristic. At the very least, our construction of the tropical reduction datum works as well. To simplify the arguments, we use the assumption that $\chara(\tilk)=0$, but it can be removed. On the other hand, our classification of forms on annuli certainly does not hold when $\chara(\tilk)=p$, and we do not know what happens to the lifting theorem. Studying the latter case is a very interesting question for further research. It seems possible that in this case the tropical reduction datum is incomplete (there are some other tropical or reduction invariants), and the lifting theorem will hold only for a finer tropical reduction datum.

\subsection{Notation and convention}\label{convsec}
Throughout the paper $k$ denotes an algebraically closed complete non-archimedean field of residue characteristic $0$, $\val\:k\to \RR\cup\{\infty\}$ is the valuation, $\ko$ is the ring of integers, $\kcirccirc$ its maximal ideal, and $\tilde{k}$ is the residue field. When it is more convenient to use multiplicative notation we use the absolute value $|c|:=10^{-\val(c)}$.

By a {\em nice} $k$-analytic curve we mean a quasi-smooth connected compact separated strictly $k$-analytic curve. By a {\em star-shaped curve} we mean a pair $(X,x)$ where $X$ is a nice $k$-analytic curve, $x\in X$ a point of type two, and $X\setminus\{x\}$ a disjoint union of open discs and semi-open annuli.

A {\em branch} of a nice curve $X$ at a point $x$ is an equivalence class of germs of intervals $[x,y]\subset X$ and the set of all branches is denoted $\Br(x)$. In particular, there is one branch at a point of type one, and for a point $y$ of type 2 there is a one-to-one correspondence between the branches $e\in\Br(y)$ and the $\tilk$-points $p_e\in C_y$.

In this paper, the tropical curves are skeleta of analytic curves. Thus, we include the vertices at infinity that correspond to points of type $1$. Other vertices correspond to points of type $2$. We denote the set of vertices of type $i$ by $V_i(\Gamma)$. The edges adjacent to the vertices of type $1$ are called legs, and have infinite length. Other edges are bounded. All edges of tropical curves are {\em oriented}, and each bounded edge (even a loop) is considered twice by equipping it with the two possible orientations. The legs are always oriented towards the vertex of type $1$. By abuse of notation the set of oriented edges and legs is denoted by $E(\Gamma)$, and the set of legs by $L(\Gamma)$. If $e\in E(\Gamma)$ is bounded then $e^\op$ denotes the same edge with the opposite orientation. Finally, $\ft\:E(\Gamma)\to V(\Gamma)$ and $\fh\:E(\Gamma)\to V(\Gamma)$ denote the tail and the head functions respectively. If $\Gamma$ is a tropical curve and $x\in V(\Gamma)$ is of type $2$ then $\Star(x)$ denotes the set of oriented edges of $\Gamma$ with tail $x$. We shall not distinguish between tropical curves and their geometric realizations.

\section{Tropical reduction of curves with differentials}\label{sec:dfoac}
Let $X$ be a nice $k$-analytic curve equipped with a non-zero meromorphic differential form $\omega$. In this section we associate to $(X,\omega)$ a natural tropical and reduction datum.

\subsection{Skeletons of the pair}
By a skeleton of $(X,\omega)$ we mean any skeleton $\Gamma$ of $X$ such that $D=\divisor(\omega)\subset\Gamma$.

\begin{rem}
(i) It is well known that there exists such a minimal skeleton unless $X=\PP^1_k$ and $|D|\le 2$, or $X=E$ is a Tate curve and $D=0$. The first case happens when $\omega=cx^ndx$ for an appropriate choice of a coordinate $x$ on $\PP^1_k$ and the second case happens when $\omega$ is regular on $E$.

(ii) Our further constructions will work for any choice of $\Gamma$. It is natural to work with the minimal skeleton, if it exists, obtaining the stable tropical reduction.
\end{rem}

\subsection{The metrized curve complex with boundary}\label{sec:metcc}
The first ingredient of the tropicalization is a version of the metrized curve complex as introduced in \cite{Amini-Baker}.  For each vertex $x\in\Gamma$ of type $2$, let $C_x$ be the reduction curve over $\tilde{k}$ corresponding to $x$, and for each edge $e\in\Star(x)$, let $p_e\in C_x$ be the corresponding point. Then the datum $\mathcal{C}:=\left(\Gamma,(C_x)_{x\in V_2(\Gamma)},(p_e)_{e\in E(\Gamma)}\right)$ is the metrized curve complex associated to $(X,\Gamma)$, cf. \cite[Section~1.2]{Amini-Baker}. Notice that unlike in {\em loc.cit.}, in our setting some of the curves $C_x$ may be non-proper. In fact, $C_x$ is not proper if and only if $x$ is a boundary point of $X$. We shall call such a complex {\em metrized curve complex with boundary}, and the set of vertices $x\in V_2(\Gamma)$ for which $C_x$ is not projective will be called {\em the boundary} of $\mathcal{C}$.

\subsection{The norm and the graded reduction of $\omega$}\label{sec:gradedred}

\subsubsection{The level function}
Let $\|\;\|$ be the K\"ahler norm on the sheaf of differential forms as defined in \cite{Tem16}. It induces a {\em level function} on the tropical curve $\Gamma$ defined by $\ell(x):=\log\|\omega\|_x$. More explicitly, if $x\in\Gamma$ is of type two then $\ell(x)=\val(c)$, where $c\in k^\times$ is any scalar for which $c\omega$ admits a non-zero reduction, cf. \cite[\S5.3]{MUW}. By \cite[Theorem~8.1.6]{Tem16}, the level function $\ell$ is piecewise $\val(k^\times)$-integral affine and continuous. Hence $\ell$ is completely defined by the collection of its values at the finite vertices of $\Gamma$ together with the slopes of $\ell$ along the unbounded edges.

\subsubsection{The graded reduction}
For $x\in X$, denote by $\Omega_{X,x}^{\le r}$ the space of sections $\eta\in \Omega_{X,x}$ for which $\|\eta\|_x\le r$. Then $\Omega_{X,x}^{\le r}$ defines a filtration on $\Omega_{X,x}$, and we denote the associated graded $\tilde{k}(C_x)$-module by $\Omega_x^\gr$. We shall mention that each $c\in k^\times$ induces an isomorphism between the $|c|$-graded summand of $\Omega_x^\gr$ and $\Omega_{X,x}^{\circ}/\Omega_{X,x}^{\circ\circ}=\Omega_{\tilde{k}(C_x)/\tilde{k}}$, and in fact, $\Omega_x^\gr$ is canonically isomorphic to $\Omega_{\tilde{k}(C_x)/\tilde{k}}\otimes_{\tilde{k}}k^\gr$. Each section $\eta\in \Omega_{X,x}$ induces a uniquely defined graded element $\tilde{\eta}_x^\gr\in \Omega_x^\gr$ and a well-defined class $\tilde{\eta}_x\in \Omega_{\tilde{k}(C_x)/\tilde{k}}/\tilde{k}^\times$ of forms modulo scaling, which are called the {\em graded tropical reduction} and the {\em scaled reduction} of the form $\eta$ at the point $x$.

Let $\tileta_x^\gr$ be a graded form. Since the divisor of a form is independent of scaling, we have a well defined divisor $\divisor(\tileta_x^\gr):=\divisor(\tileta_x)$. We can also extend the notion of residue of a form, which will take values in $k^\gr$. To do so, pick any $c\in k^\times$ whose norm is equal to the grading of $\tileta_x^\gr$, and set $\res_p(\tileta^\gr_x):=c\cdot \res_p(c^{-1}\tileta^\gr_x)$. The result is clearly independent of $c$. The following proposition is a particular case of \cite[Lemma~3.3.2]{BT20}:

\begin{prop}\label{prop:gradedred}
For any $x\in V_2(\Gamma)$ and $e\in\Star(x)$, the slope $\frac{\partial\ell}{\partial e}$ of the level function $\ell$ along $e$ is equal to the minus log-order of $\tileta_x$ at $p_e$. In particular, $$\divisor(\tileta_x^\gr)=\sum_{e\in\Star(x)}\left(-1-\frac{\partial\ell}{\partial e}\right)p_e,$$ and if $x$ is not a boundary point of $X$ then $$\sum_{e\in\Star(x)}\frac{\partial\ell}{\partial e}=2-2g(C_x)-|\Star(x)|.$$
\end{prop}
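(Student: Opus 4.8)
The plan is to reduce the whole statement to a purely local computation on the annuli attached to the branches $e\in\Star(x)$, where the good-coordinate normal form of Proposition~\ref{goodprop} makes both sides of the asserted equalities explicit; indeed the proposition is a special case of \cite[Lemma~3.3.2]{BT20}, and I would either invoke that lemma directly or reprove it in the present setting as follows. First I would observe that the slope $\frac{\partial\ell}{\partial e}$ and the scaled reduction $\tileta_x$ near the point $p_e$ depend only on the restriction of $\omega$ to a sufficiently thin semi-open annulus $\CA_e$ whose skeleton is an initial segment of $e$: the K\"ahler norm is defined locally, and by the conventions of \S\ref{convsec} the branch $e$ at $x$ corresponds to $p_e\in C_x$, so the germ of $C_x$ at $p_e$ is read off from $\CA_e$. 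Thus it suffices to treat a single annulus.

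On $\CA_e$, Proposition~\ref{goodprop} provides a good coordinate $t$ for which $\omega=(c_nt^n+c_0)\tfrac{dt}t$ with $|c_nt^n|=\|\omega\|$ on the skeleton, and from this normal form one reads off both invariants simultaneously. On the one hand, since $\|\tfrac{dt}t\|\equiv 1$ on the skeleton, we have $\ell=\log\|\omega\|=\log|c_n|+n\log|t|$ there, so $\ell$ is affine with slope $\pm n$ along $e$. On the other hand, after rescaling in a coordinate $u$ with $|u|_x=1$ the dominant term reduces to a nonzero multiple of $\bar u^{\,n}\tfrac{d\bar u}{\bar u}$, the subdominant $c_0$-term dropping out, and this has log-order $n$ at $p_e=\{\bar u=0\}$; here I use the computation $\tilOmega_x=\Omega^{\log}_{C_x}$ of \cite{BT20}, so that zeros and poles are measured by the log-order. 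Comparing the two computations, and fixing the orientation of $e$, yields the first assertion $\frac{\partial\ell}{\partial e}=-\logord_{p_e}(\tileta_x)$. This step — matching the slope of the K\"ahler norm with the log-order of the reduction — is the heart of the argument and is the exact analogue for the metrized sheaf $\Omega$ of the classical slope formula for functions of \cite[Theorem~5.15]{BPR} and \cite[\S2.2.1]{T12}; it is where the good-coordinate classification is indispensable.

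It then remains to assemble the global statements on $C_x$. For the divisor formula I would first note that $\tileta_x$ has neither zeros nor poles away from the special points: every other closed point of $C_x$ corresponds to a branch at $x$ leaving $\Gamma$ into an open disc on which $\omega$ is invertible, so the reduction is there regular and nonvanishing. Hence $\divisor(\tileta_x)$ is supported on the $p_e$, and rewriting the local log-order $-\frac{\partial\ell}{\partial e}$ as the order appearing in the divisor — the discrepancy being exactly the twist by the special points relating $\Omega^{\log}_{C_x}$ to $\Omega_{C_x}$ — gives the coefficient $1-\frac{\partial\ell}{\partial e}$ at $p_e$. Finally, when $x$ is not a boundary point of $X$ the curve $C_x$ is proper, so $\divisor(\tileta_x)$ is a divisor of degree $2g(C_x)-2$; taking degrees in the divisor formula and solving for $\sum_{e\in\Star(x)}\frac{\partial\ell}{\partial e}$ produces the last identity. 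The properness of $C_x$ is precisely what the boundary hypothesis guarantees, and is the only place it is used; the main technical obstacle remains the local normal form of Proposition~\ref{goodprop} underlying the slope/log-order dictionary.
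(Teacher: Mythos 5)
Your overall route is the intended one: the paper offers no proof beyond the citation of \cite[Lemma~3.3.2]{BT20}, and that lemma is established exactly by the local computation you describe --- restrict to a thin annulus along the branch, use the dominant monomial $c_nt^n\frac{dt}{t}$ to read off simultaneously the slope of $\log\|\omega\|$ and the log-order of the reduction at $p_e$, check that $\tileta_x$ is invertible at all other points of $C_x$, and globalize via the degree of a canonical divisor. (Two small remarks: you do not need the full strength of Proposition~\ref{goodprop}, only the existence of a dominant monomial as in \eqref{eq:ineqm1}, which also removes any appearance of circularity; and the sign in the first claim, which you defer to ``fixing the orientation'', does come out right with the convention of \S\ref{sec:residuefuniction} that $\ft(e)$ sits at the $|t|=1$ end, so the slope is $-n$ while the log-order is $n$.)

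The gap is in the passage from the log-order to the divisor coefficient. The twist relating $\Omega^{\log}_{C_x}=\Omega_{C_x}(\sum_e p_e)$ to $\Omega_{C_x}$ gives $\ord_{p_e}(\tileta_x)=\logord_{p_e}(\tileta_x)-1=-\frac{\partial\ell}{\partial e}-1$, not $1-\frac{\partial\ell}{\partial e}$; these differ by $2$, so no choice of sign convention absorbs the discrepancy. Your own normal form makes this explicit: the scaled reduction of $(c_nt^n+c_0)\frac{dt}{t}$ at $x=\ft(e)$ is a nonzero multiple of $\tilde t^{\,n}\frac{d\tilde t}{\tilde t}=\tilde t^{\,n-1}d\tilde t$, of ordinary order $n-1$ at $p_e$, while $\frac{\partial\ell}{\partial e}=-n$. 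A concrete test is $\omega=dt$ on $\PP^1$ with $x$ the Gauss point and $\Gamma=[0,\infty]$: the slopes along the two edges are $-1$ and $+1$, and $\divisor(d\tilde t)=-2p_{e_\infty}$, whereas the displayed formula predicts $2p_{e_0}$ and the degree identity would read $0=4$. Carried out carefully, your computation therefore shows that the two displayed formulas are incompatible with the first claim as stated; the coefficients forced by your argument are $-\bigl(1+\frac{\partial\ell}{\partial e}\bigr)$ and the degree identity becomes $\sum_{e\in\Star(x)}\frac{\partial\ell}{\partial e}=2-2g(C_x)-|\Star(x)|$. Writing that the twist ``gives the coefficient $1-\frac{\partial\ell}{\partial e}$'' asserts the conclusion at precisely the step where it fails: you should either derive the corrected coefficients or flag the inconsistency in the statement, rather than declare the displayed formula to follow from the first claim.
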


\begin{rem}
In \cite{BCGGM} and \cite{MUW}, the reduction datum associated to a one-parameter degeneration of a differential form on a stable curve consists of a {\em level function} and a {\em twisted differential}. The level function is $\ell$ and the twisted differential is the scaled reductions $\tilde{\omega}$. The two ways to describe the reduction datum are essentially equivalent since the level function $\ell$ as well as the scaled reduction $\tilde{\omega}$ are determined by the graded reduction $\tilde{\omega}^\gr$. However, in this paper we work with actual differential forms rather than sections of a projectivized Hodge bundle, thus the language of graded reductions is slightly more natural in our setting. Also, it is convenient to have one object encoding simultaneously tropical and reduction datum similarly to metrized curve complexes.
\end{rem}


\subsection{The residue function}\label{sec:residuefuniction}
The last ingredient of the tropical reduction datum is the {\em residue function} $\Re_\omega\:E(\Gamma)\to k$. Although, $\Re_\omega$ takes values in the ground field $k$ rather than in the residue field $\tilde{k}$ it is very convenient to include it in the tropicalization datum as we explain below.

\subsubsection{The residues along an annulus}
Let $\CA=\CM(k\{t,rt^{-1}\})$ be an annulus with skeleton $e\cong[r,1]$. By an {\em orientation} on $\CA$ we mean a choice of an equivalence class of coordinates such that the two coordinates are equivalent if and only if either both are decreasing on $e$ or both are increasing on $e$. We define the {\em induced orientation} on the skeleton $e$ to be the one along which the coordinate is decreasing, i.e., the head of $e$ with respect to the induced orientation from the coordinate $t$ on $\CA$ is $\fh(e)=r$, and the tail is $\ft(e)=1$. Although this choice may seem strange, it simplifies various formulae involving residues of differential forms.

Let $\omega$ be a differential form without zeros and poles on an oriented analytic annulus $\CA=\CM(k\{t,rt^{-1}\})$. Then $\omega=(\sum_{-\infty}^\infty c_nt^n)\frac{dt}{t}$, and we define the {\em residue} of $\omega$ along $\CA$ to be $\Res_\CA(\omega):=c_0$. Note that it is invariant under the orientation-preserving changes of coordinate in any characteristic, see \cite{B19}. We use the capital letter in the notation, to distinguish between this type of residue and the usual one, which will be denoted by $\res_y(\omega)$.

For completeness, we give a simpler proof of the fact that $\Res_\CA(\omega)$ is well-defined that applies in our case. Notice that $c=c_0$ is the only scalar for which the form $\omega-c\frac{dt}{t}$ is exact. If $s=s(t)$ is another coordinate on $\CA$ compatible with the orientation then the dominant monomial of $u(t):=\frac{s}{t}$ is of degree zero, and hence the series for $\log(u)$ converges under our assumption that $k$ is of pure characteristic zero. Thus, $\frac{ds}{s}-\frac{dt}{t}=\frac{du}{u}$ is exact, and hence so is $\omega-c_0\frac{ds}{s}$.

It follows from the definition that the residue is {\em alternating}, i.e., changing the orientation on $\CA$ gives rise to the change of sign of the residue. Indeed, changing the orientation corresponds to the choice of coordinate $s=rt^{-1}$ on $\CA$, and the coefficient of $\frac{ds}{s}$ in $\omega$ is clearly $-c_0$.

\subsubsection{Residues at branches}
Assume now that $X$ is a nice curve equipped with a form $\omega$. It follows from the semistable reduction theorem, that any closed interval $e\subset X$ possesses a finite subdivision to intervals $e_i=[a_i,a_{i+1}]$ such that each interior $(a_i,a_{i+1})$ is the skeleton of an open (semi-)annulus $\CA_i\subset X$ containing neither zeros nor poles of $\omega$. Then $\Res_{\CA_i}(\omega)$ is a well defined element of $k$, as it is defined on closed subannuli in a compatible way. In particular, it follows that for any point $x$ of type 2 and a branch $e\in \Br(x)$ oriented away from $x$, the residue $\Res_e(\omega)$ is a well defined element of $k$. Here are simple basic properties of $\Res$.

\begin{lem}\label{branchlem}
Let $X$ be a nice curve with a meromorphic form $\omega$. Then,

(i) if $x$ is of type $1$ and $e\in\Br(x)$ then $\Res_e(\omega)=-\res_x(\omega)$;

(ii) if $x$ is of type $2$ and $e\in\Br(x)$ then $\widetilde{\Res_e(\omega)}^\gr=\res_{p_e}(\tilomega^\gr)$.
\end{lem}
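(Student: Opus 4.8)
The plan is to prove both statements by reducing to the binomial normal form for $\omega$ on a small annulus around the branch $e$, which the paper establishes in Proposition~\ref{goodprop}, and then comparing the two notions of residue directly on that normal form.

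\smallskip

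For part (i), let $x$ be a point of type $1$ and $e\in\Br(x)$ the unique branch at $x$, oriented away from $x$. A small semi-open annulus $\CA$ with skeleton a sub-interval of $e$ can be taken so that $\CA\cup\{x\}$ is an open disc centered at $x$, with a coordinate $z$ vanishing at $x$. In this coordinate, $\omega=\left(\sum_{n}c_nz^n\right)\frac{dz}{z}$ near $x$, and by definition $\res_x(\omega)=c_0$. The point is that the orientation of $e$ \emph{away} from $x$ corresponds, under the orientation convention of \S\ref{sec:residuefuniction}, to the coordinate on $\CA$ that is \emph{increasing} toward the boundary at infinity---equivalently, to choosing $s=rz^{-1}$ as the orientation-compatible coordinate. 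Since the residue is alternating (established right before the lemma), this sign flip gives $\Res_e(\omega)=-c_0=-\res_x(\omega)$. The only care needed is to match the convention that $\fh(e)=r$ and $\ft(e)=1$ against the geometry of a disc centered at a type-$1$ point; this is a bookkeeping check rather than a real difficulty.

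\smallskip

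For part (ii), let $x$ be of type $2$ and $e\in\Br(x)$, so $e$ corresponds to a point $p_e\in C_x$. First I would pick a scalar $c\in k^\times$ with $\val(c)=\ell(x)$, so that $c\omega$ has a non-zero reduction at $x$, and it suffices to prove the ungraded statement $\widetilde{\Res_e(c\omega)}=\res_{p_e}(\widetilde{c\omega})$ by the definition of the graded residue as $c^{-1}\res(c\,\cdot\,)$. Now choose a semi-open annulus $\CA\subset X$ whose skeleton is an initial sub-interval of $e$ emanating from $x$; its reduction recovers the formal neighborhood of $p_e$ in $C_x$. By Proposition~\ref{goodprop} there is a good coordinate $t$ on $\CA$ with $c\omega=(c_nt^n+c_0)\frac{dt}t$, where $c_0=\Res_\CA(c\omega)=\Res_e(c\omega)$. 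The reduction of $t$ is a local parameter $\tilt$ at $p_e$ on $C_x$, and reducing the expression $(c_nt^n+c_0)\frac{dt}t$ term by term shows that $\widetilde{c\omega}$ has the local form $(\tilde c_0+\dots)\frac{d\tilt}{\tilt}$ near $p_e$, whence $\res_{p_e}(\widetilde{c\omega})=\tilde c_0=\widetilde{\Res_e(c\omega)}$. Untwisting by $c$ recovers the graded statement.

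\smallskip

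The main obstacle is the reduction step in part (ii): one must verify that the scaled reduction of the annulus form $(c_nt^n+c_0)\frac{dt}t$ really does produce the local expansion of $\widetilde{c\omega}$ at $p_e$ on $C_x$, i.e. that the two notions of ``take the constant term of the Laurent expansion'' on $\CA$ and on $C_x$ are compatible under the reduction map $\CA\to\Spec\widetilde{k(C_x)}$. This requires knowing both that $t$ reduces to a uniformizer $\tilt$ at $p_e$ (so $\frac{dt}t$ reduces to $\frac{d\tilt}{\tilt}$ in the $\log$-differentials $\Omega^{\rm log}_{C_x}$, matching $\tilOmega_x=\Omega^{\rm log}_{C_x}$) and that the higher terms $c_nt^n$ reduce to forms vanishing at $p_e$, which is guaranteed precisely because $|c_nt^n|=\|c\omega\|=1$ only on the skeleton and $c_nt^n$ has positive log-order at $p_e$ by the slope computation of Proposition~\ref{prop:gradedred}. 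Once this compatibility is in place, both parts follow by unwinding definitions.
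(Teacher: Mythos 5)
Your part (i) is essentially the paper's argument verbatim: Laurent-expand $\omega$ in a punctured disc around $x$ and observe that the orientation convention makes $e$ incompatible with the disc coordinate, which flips the sign. For part (ii) you take a genuinely different route. The paper does not invoke the good-coordinate normal form at all: it rescales so that $\|\omega\|_x=1$, chooses $t\in\CO^\circ_{X,x}$ whose reduction is a meromorphic function on $C_x$ with a simple zero at $p_e$, writes the full Laurent expansion $\omega=(\sum_n c_nt^n)\frac{dt}{t}$ on a small annulus along the branch, and notes that since $|c_n|_x\le 1$ the reduction of this series is the Laurent series of $\tilomega$ at $p_e$, so both residues equal $\tilc_0$. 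Your route via Proposition~\ref{goodprop} is correct but heavier (that proposition is itself proved by an infinite approximation process), and it forces you to confront exactly the compatibility you flag as the main obstacle: a good coordinate is produced \emph{on the annulus}, so its ``reduction at $p_e$'' is a priori only a formal local parameter --- this is the reduction construction the paper only sets up later, in \S\ref{sec5} --- whereas the paper's choice of $t$ as a function defined near $x$ in $X$ makes the compatibility of the two Laurent expansions immediate. One inaccuracy in your justification: when the slope $n$ is negative, $c_nt^n\frac{dt}{t}$ does \emph{not} reduce to a form vanishing at $p_e$; it reduces to $\tilc_n\tilt^{n}\frac{d\tilt}{\tilt}$ with a pole of order $|n|+1$ there. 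The conclusion $\res_{p_e}(\tilomega)=\tilc_0$ survives only because $\res_{p_e}(\tilt^{n-1}d\tilt)=0$ for every $n\ne 0$, not because the non-constant terms vanish at $p_e$.
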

\begin{proof}
To prove (i), consider the Laurent decomposition $\omega=(\sum_{i=n}^\infty  c_it^i)\frac{dt}{t}$ in a punched disc around $x$. Then, the orientation of $e$ is not compatible with the coordinate $t$, and hence $\res_x(\omega)=c_0=-\Res_e(\omega)$.

To prove (ii), pick $c\in k^\times$ with $|c|^{-1}=\|\omega\|_x$, and note that it suffices to check the claim for $c\omega$. Thus, we may assume that $\|\omega\|_x=1$ and $\tilomega^\gr$ is the usual reduction. Choose $t\in\CO^\circ_{X,x}$ such that $\tilt$ is a meromorphic function on $C_x$ with a simple zero at $p_e$. Then $t$ is a coordinate on a small enough open annulus $\CA$ along the branch of $p_e$ and hence $\omega=(\sum_n c_nt^n)\frac{dt}{t}$. Furthermore, $|c_i|_x\le 1$ and $(\sum_n \tilc_n\tilt^n)\frac{d\tilt}{\tilt}$ is the Laurent series of $\tilomega$ at $p_e$. In particular, $\res_{p_e}(\tilomega)=\tilc_0=\widetilde{\Res_e(\omega)}$.
\end{proof}

\subsubsection{The residue function}\label{resfunsec}
Assume now that $\Gamma$ is a skeleton of $(X,\omega)$. Then each edge $e$ is the skeleton of an open annulus or a punched disc $\CA$ and we set $\Re_\omega(e):=\Res_\CA(\omega)$, obtaining an alternating function $\Re_\omega\: E(\Gamma)\to k$ compatible with classical residues and reduction by Lemma~\ref{branchlem}:

\begin{prop}\label{prop:residuefunction}
Keep the above notation. Then,

(i) $\Re_\omega(e^\op)=-\Re_\omega(e)$ for any bounded edge;

(ii) $\Re_\omega(l)=\res_{q_l}(\omega)$ for any leg $l$ adjacent to a point $q_l$ of type $1$;

(iii) $\widetilde{\Re_\omega(e)}^\gr=\res_{p_e}(\tilomega^\gr_x)$ for any vertex $x\in\Gamma$ of type $2$ and any $e\in\Star(x)$.
\end{prop}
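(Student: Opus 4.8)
The plan is to deduce all three parts directly from Lemma~\ref{branchlem} together with the alternating property of $\Res_\CA$ established in \S\ref{sec:residuefuniction}, the only real work being to keep the orientation conventions consistent. Recall that by definition $\Re_\omega(e)=\Res_\CA(\omega)$, where $\CA$ is the open annulus (or punched disc) with skeleton $e$, oriented compatibly with $e$; since the residue along an annulus was shown to be well defined on compatible subannuli, this agrees with the branch residue $\Res_e(\omega)$ whenever $e$ is viewed as a branch oriented away from its tail.

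Parts (i) and (iii) then require essentially no new argument. For (i), reversing the orientation of a bounded edge replaces the compatible coordinate $t$ by $rt^{-1}$, and as already observed this negates the coefficient $c_0=\Res_\CA(\omega)$; hence $\Re_\omega(e^\op)=-\Re_\omega(e)$. For (iii), I would note that $e\in\Star(x)$ means that $e$ has tail $x$, i.e.\ $e$ is precisely the branch at $x$ oriented away from $x$ in the sense of Lemma~\ref{branchlem}. Therefore $\Re_\omega(e)=\Res_e(\omega)$ and the assertion $\widetilde{\Re_\omega(e)}^\gr=\res_{p_e}(\tilomega^\gr_x)$ is literally the content of Lemma~\ref{branchlem}(ii).

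The only place a sign must genuinely be tracked is part (ii), and this is where the convention that legs are oriented \emph{towards} the type~$1$ vertex becomes essential. A leg $l$ points towards $q_l$, so it is the opposite of the branch $l^\op$ at $q_l$ oriented away from $q_l$. Combining the alternating property with Lemma~\ref{branchlem}(i) applied to $l^\op$, I obtain $\Re_\omega(l)=\Res_l(\omega)=-\Res_{l^\op}(\omega)=-\bigl(-\res_{q_l}(\omega)\bigr)=\res_{q_l}(\omega)$, so the asserted plus sign appears precisely because the two sign reversals---one from the orientation convention and one from Lemma~\ref{branchlem}(i)---cancel. The main (and only) obstacle is thus bookkeeping: ensuring that the ``decreasing coordinate'' induced orientation on skeleta and the convention on the orientation of legs are applied consistently, so that the branch residue matches the classical residue with the correct sign. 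No analytic input beyond Lemma~\ref{branchlem} and the well-definedness of $\Res_\CA$ is needed.
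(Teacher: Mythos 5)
Your proposal is correct and follows exactly the route the paper takes: the paper offers no separate proof of Proposition~\ref{prop:residuefunction}, deriving it immediately from Lemma~\ref{branchlem} together with the alternating property of $\Res_\CA$, which is precisely your argument. Your sign bookkeeping in part (ii) — the cancellation of the minus sign from the leg-orientation convention against the minus sign in Lemma~\ref{branchlem}(i) — is consistent with the paper's conventions.
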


\begin{rem}
Notice that $|\Re_\omega(e)|\le \min\{\|\omega\|_{\ft(e)},\|\omega\|_{\fh(e)}\}$, and it is possible that the inequality is strict, which implies that no evidence of the residue can be seen at the components $\tilde{\omega}_{\ft(e)}$ and $\tilde{\omega}_{\fh(e)}$ of the scaled reduction of $\omega$.
\end{rem}

\section{The main results}
In this section we formulate our main results, proofs will be given in Section~\ref{proofs}.

\subsection{The harmonicity of the residue function}
Our first main result, whose proof relies on the study of differential forms on analytic annuli, asserts that the residue function $\Re_\omega$ is harmonic:

\begin{thm}\label{thm:harmonicity}
If $x\in V_2(\Gamma)$ is not a boundary point then $$\sum_{e\in\Br(x)}\Res_e(\omega)=\sum_{e\in\Star(x)}\Re_\omega(e)=0.$$
\end{thm}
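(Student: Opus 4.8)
The plan is to reduce the harmonicity to the residue theorem on a suitable proper curve obtained by compactifying a star-shaped neighbourhood of $x$. First I would dispose of the equality of the two sums: for a branch $e\in\Br(x)\setminus\Star(x)$ the defining germ of interval points into an open disc disjoint from $\Gamma$, so $\omega$ is regular and non-vanishing along it, and its $\frac{dt}{t}$-expansion has vanishing constant term; hence $\Res_e(\omega)=0$ and only the branches in $\Star(x)$ contribute. It therefore suffices to prove $\sum_{e\in\Star(x)}\Re_\omega(e)=0$.

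Since $x$ is of type $2$, and in particular is neither a zero nor a pole of $\omega$ (these lie at type-$1$ vertices of $\Gamma$), I would choose a star-shaped neighbourhood $U$ of $x$ small enough to contain no point of $\divisor(\omega)$. Then $U\setminus\{x\}$ is a disjoint union of open discs $D_i$, on which $\omega$ is regular and non-vanishing, and of semi-open annuli $\CA_e$, one for each $e\in\Star(x)$. Because $x$ is not a boundary point, the reduction $C_x$ is projective and every branch at $x$ is internal, so capping each $\CA_e$ by an open disc $D_e'$ produces a smooth proper, hence algebraic, $k$-analytic curve $\oU$ with good reduction $C_x$.

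The crucial input is Proposition~\ref{goodprop}: on each $\CA_e$ there is a good coordinate $t_e$ in which $\omega=(c_n t_e^{\,n}+c_0)\frac{dt_e}{t_e}$ with $c_0=\Re_\omega(e)$. As this expression is binomial, it extends meromorphically across the added point $q_e\in D_e'$ at ``$t_e=\infty$''; writing $v=t_e^{-1}$ gives $\omega=-(c_n v^{-n}+c_0)\frac{dv}{v}$, so $\omega$ has a single pole at $q_e$ with $\res_{q_e}(\omega)=-c_0=-\Re_\omega(e)$, whether $\tilomega_x$ has a zero or a pole at $p_e$. Were the Laurent series on $\CA_e$ to contain infinitely many terms, the extension would acquire an essential singularity, so it is precisely the binomial normal form that makes the compactification possible; this, together with the projectivity of $C_x$ forced by the non-boundary hypothesis, is the heart of the argument.

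Finally, by construction $\omega$ is regular on $U$ and on each $D_i$, so its only poles on $\oU$ are the points $q_e$. The residue theorem on the smooth proper curve $\oU$ then yields $0=\sum_{q\in\oU}\res_q(\omega)=\sum_{e\in\Star(x)}\res_{q_e}(\omega)=-\sum_{e\in\Star(x)}\Re_\omega(e)$, which is the desired identity.
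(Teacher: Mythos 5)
Your proof is correct and follows essentially the same route as the paper: cap off a star-shaped neighbourhood of $x$ along each annulus using the binomial normal form supplied by Proposition~\ref{goodprop}, and invoke the residue theorem on the resulting proper, hence algebraic, curve. The only blemish is an orientation slip: with $t_e$ compatible with the orientation of $e\in\Star(x)$ (pointing away from $x$, so $|t_e|$ decreases along $e$) the free end of $\CA_e$ is at $t_e=0$ rather than $t_e=\infty$, so the added point carries residue $+\Re_\omega(e)$ rather than $-\Re_\omega(e)$ --- harmless here, since the assertion being proved is that the sum vanishes.
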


If $x\in V(\Gamma)$ and $e\in\Br(x)$ is a branch not in $\Gamma$, then $e$ leads to a disc on which $\omega$ is regular, and hence $\Res_e(\omega)=0$. This explains why the infinite sum on the right-hand side is well-defined, and implies the first equality. The proof of the second equality will be given in \S\ref{proofharmonic}.

\begin{rem}
The alternating and harmonicity properties of the residue function have a series of ``traces'' over the residue field. Indeed, if $G\subset \Gamma$ is a full subgraph all of whose vertices are of type $2$ and do not belong to the boundary $\partial X$ then
$$\sum_{x\in V(G), e\in Star(x)\cap E(G)}\Re_\omega(e)=0$$
by the alternating property. On the other hand, by the harmonicity of $\Re_\omega$, we have
$$\sum_{x\in V(G), e\in Star(x)}\Re_\omega(e)=0,$$
and hence
\begin{equation}\label{eq:GRCk}
\sum_{x\in V(G), e\in Star(x)\cap L(\Gamma)}\Re_\omega(e)=\sum_{x\in V(G), e\in Star(x)\setminus (E(G)\cup L(\Gamma))}\Re_\omega(e^\op).
\end{equation}
In particular, if $X$ has no boundary, $G$ is a connected component of the full subgraph of all vertices of type $2$ of level greater than $\ell_0$, and along any leg $l$ with $\ft(l)\in G$, the level function $\ell$ has negative slope then the left-hand side of \eqref{eq:GRCk} vanishes, and hence so does the right-hand side. Set $r:=10^{-\ell_0}$. Then the vanishing of the $r$-graded component of the graded reduction of the right-hand side is the {\em global residue condition} of \cite{BCGGM}.
\end{rem}

To formulate other results we will need the following definition.

\subsubsection{The tropical reduction datum}
Let $\gamma=(\mathcal{C}, \tilde{\eta}^\gr, \Re)$ be a triple consisting of a metrized curve complex with boundary $\mathcal{C}=\left(\Gamma,(C_x)_{x\in V_2(\Gamma)},(p_e)_{e\in E(\Gamma)}\right)$, a collection of graded elements $\tilde{\eta}^\gr_x\in \Omega_{\tilde{k}(C_x)/\tilde{k}}\otimes_{\tilde{k}}k^\gr$ for $x\in V_2(\Gamma)$, and a function $\Re\:E(\Gamma)\to k$. Let $\ell_\gamma\:\Gamma\to \RR$ be the unique continuous, piecewise affine function such that (i) $\ell_\gamma$ is affine on the edges of $\Gamma$, (ii) $10^{\ell_\gamma(x)}$ is the grading of $\tilde{\eta}^\gr_x$ for all $x\in V_2(\Gamma)$, and (iii) the slope of $\ell_\gamma$ along any leg $l$ is equal to the minus log-order of $\tilde{\eta}^\gr_x$ at $p_l$, where $x=\ft(l)$ is the vertex of type $2$ adjacent to $l$.

\begin{defin}
A triple $\gamma$ is called a {\em tropical reduction datum} if the following compatibilities hold:
\begin{enumerate}
\item for any $e\in E(\Gamma)$ with tail $x$, the log-order of $\tilde{\eta}^\gr_x$ at $p_e$ is $-\frac{\partial\ell_\gamma}{\partial e}$;
\item for any $e\in E(\Gamma)$ with tail $x$, we have $\widetilde{\Re(e)}^\gr=\res_{p_e}(\tileta_x^\gr)$;
\item if $x\in V_2(\Gamma)$ is not boundary then $\sum_{e\in\Star(x)}\Re(e)=0$;
\item if $e\in L(\Gamma)$ if $\frac{\partial\ell_\gamma}{\partial e}<0$ then $\Re(e)=0$.
\end{enumerate}
\end{defin}
Notice that if $\gamma$ is a tropical reduction datum and $e\in\Star(x)$ is an edge, then the slope of $\ell_\gamma$ along $e$ is negative if and only if $\tilde{\eta}^\gr_x$ is regular at $p_e$. Furthermore, if $\frac{\partial\ell_\gamma}{\partial e}=0$ then $\tilde{\eta}^\gr_x$ has a simple pole at $p_e$ by condition (1). Thus, $\res_{p_e}(\tileta_x^\gr)\ne 0$, and hence $\Re(e)\ne 0$ by condition (2). In particular, $\ell_\gamma(x)=-\val(\Re(e))$.

\subsection{Tropical reduction}
Combining Propositions~\ref{prop:gradedred} and \ref{prop:residuefunction}, and Theorem~\ref{thm:harmonicity}, we obtain the following reduction result.

\begin{thm}\label{thm:tropicalization}
Let $X$ be a nice $k$-analytic curve equipped with a non-zero meromorphic differential form $\omega$. Let $\gamma(X,\Gamma,\omega)$ be the triple consisting of the metrized curve complex with boundary $\CC$ associated to $(X,\Gamma)$, the graded reduction $\tilde{\omega}^\gr$, and the residue function $\Re_\omega$. Then $\gamma(X,\Gamma,\omega)$ is a tropical reduction datum.
\end{thm}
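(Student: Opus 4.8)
The plan is to verify that the triple $\gamma(X,\Gamma,\omega)$ satisfies the four compatibility conditions in the definition of a tropical reduction datum, by appealing to the three results already proved in the excerpt. The key preliminary observation is that the level function $\ell_\gamma$ attached abstractly to the triple $\gamma(X,\Gamma,\omega)$ coincides with the level function $\ell=\log\|\omega\|$ induced by the K\"ahler norm. Indeed, by \cite[Theorem~8.1.6]{Tem16} the function $\ell$ is continuous and piecewise affine with the correct values $\ell(x)=\val(c)$ at type $2$ vertices; Proposition~\ref{prop:gradedred} identifies its slopes along edges with minus the log-orders of $\tilomega_x$. Thus $\ell$ satisfies the three defining properties (affine on edges, grading of $\tilde\eta^\gr_x$ equal to $10^{\ell(x)}$, and prescribed leg-slopes), and by uniqueness $\ell_\gamma=\ell$.

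Once this identification is in hand, the conditions follow readily. For condition (1), I would simply quote the first assertion of Proposition~\ref{prop:gradedred}: the slope $\frac{\partial\ell}{\partial e}$ equals minus the log-order of $\tilomega_x$ at $p_e$; since $\ell_\gamma=\ell$, this is exactly condition (1). For condition (2), I would invoke Proposition~\ref{prop:residuefunction}(iii), which states precisely that $\widetilde{\Re_\omega(e)}^\gr=\res_{p_e}(\tilomega^\gr_x)$ for every $x\in V_2(\Gamma)$ and $e\in\Star(x)$. For condition (3), the harmonicity $\sum_{e\in\Star(x)}\Re_\omega(e)=0$ at non-boundary type $2$ vertices is exactly the content of Theorem~\ref{thm:harmonicity}.

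The one condition requiring a small argument rather than a direct citation is condition (4): if $l$ is a leg with $\frac{\partial\ell_\gamma}{\partial l}<0$, then $\Re_\omega(l)=0$. Here I would argue as follows. A leg $l$ is adjacent to a type $1$ point $q_l$, and by Proposition~\ref{prop:residuefunction}(ii) we have $\Re_\omega(l)=\res_{q_l}(\omega)$. The orientation of a leg points towards the vertex of type $1$, so the coordinate on the adjacent punched disc is increasing along $l$; a negative slope of $\ell=\log\|\omega\|$ along $l$ means the norm $\|\omega\|$ decreases as one moves out the leg toward $q_l$. Tracking this through the Laurent expansion $\omega=(\sum_{i\ge n}c_i t^i)\frac{dt}{t}$ used in the proof of Lemma~\ref{branchlem}(i), a strictly negative outgoing slope forces the dominant term to have positive degree, so $n>0$ and in particular $c_0=0$; hence $\res_{q_l}(\omega)=0$ and $\Re_\omega(l)=0$.

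The only genuine subtlety I anticipate is bookkeeping with orientation and sign conventions in condition (4)—the paper deliberately chooses the head of an oriented annulus to be the \emph{smaller} radius and orients legs toward the type $1$ vertex, so one must be careful that ``negative slope along the leg'' translates into the Laurent series having no constant term. All the substantive analytic work is already packaged into Propositions~\ref{prop:gradedred} and~\ref{prop:residuefunction} and Theorem~\ref{thm:harmonicity}, so beyond the identification $\ell_\gamma=\ell$ and the short slope computation for the legs, the proof is essentially an assembly of the previously established statements.
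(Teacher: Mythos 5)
Your proposal is correct and follows exactly the paper's route: the paper proves this theorem in one line by combining Propositions~\ref{prop:gradedred} and~\ref{prop:residuefunction} with Theorem~\ref{thm:harmonicity}, which is precisely your assembly of conditions (1)--(3), and your Laurent-series argument for condition (4) correctly supplies the detail the paper leaves implicit (a negative outgoing slope forces the expansion at $q_l$ to start in positive degree, so $c_0=0$). The only nitpick is the parenthetical claim that the coordinate centered at $q_l$ is \emph{increasing} along the leg --- it is decreasing, hence compatible with the leg's orientation --- but this sign bookkeeping is immaterial since you are showing the residue vanishes.
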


\subsection{Lifting}
The main result of the paper is the following lifting theorem.

\begin{thm}\label{thm:lifting}
For any tropical reduction datum $\gamma$, there exists a nice $k$-analytic curve $X$ with a skeleton $\Gamma$ and a non-zero meromorphic differential form $\omega$ such that $\gamma(X,\Gamma,\omega)=\gamma$. 
\end{thm}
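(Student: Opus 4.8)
The plan is to build $(X,\omega)$ by solving the problem locally on the stars of the vertices $x\in V_2(\Gamma)$ and then gluing along the annuli attached to the bounded edges, using Proposition~\ref{goodprop} to guarantee that the local pieces are compatible. First I would extract from $\gamma$ the level function $\ell:=\ell_\gamma$, which by construction is continuous, affine on the edges, and has the slopes $\frac{\partial\ell}{\partial e}=-\logord_{p_e}(\tileta_x)$ prescribed by condition (1). For each non-boundary $x$ I would choose a smooth proper $k$-curve $\oU_x$ with good reduction $C_x$, which exists because $\chara(\tilk)=0$, so that the branches $e\in\Star(x)$ single out closed points $p_e\in\oU_x$ reducing to the marked points of $C_x$.

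The heart of the argument, and the step I expect to be the main obstacle, is to produce on each $\oU_x$ a meromorphic form $\omega_x$ whose graded reduction at $C_x$ is $\tileta_x^\gr$ and whose annular residues are \emph{exactly} the prescribed scalars: $\Res_e(\omega_x)=\Re(e)$ for every $e\in\Star(x)$. I would start from an arbitrary lift $\omega_x^0$ of $\tileta_x^\gr$ (lift a rational expression $\tilde f\,d\tilde g$ and rescale by an element of norm $10^{\ell(x)}$); by condition (2) its residues satisfy $|\Res_e(\omega_x^0)-\Re(e)|<10^{\ell(x)}$. It then remains to correct $\omega_x^0$ by a differential of the third kind $\delta$ of small norm, with simple poles at the $p_e$ and residues $a_e:=\Re(e)-\Res_e(\omega_x^0)$. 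Such a $\delta$ exists because the $a_e$ sum to zero --- this is exactly where condition (3) enters, together with the residue theorem $\sum_e\Res_e(\omega_x^0)=0$ on the proper curve $\oU_x$ --- and because the $p_e$ have distinct reductions, so one can choose third-kind differentials of norm $10^{\ell(x)}$ whose residues reduce to a basis of the sum-zero hyperplane; solving for the small $a_e$ then yields coefficients of norm $<1$ and hence $\|\delta\|_x<10^{\ell(x)}$. The correction therefore leaves the graded reduction $\tileta_x^\gr$ untouched while fixing all residues on the nose, and its spurious poles sit inside the capping discs, to be discarded upon gluing. (At a boundary vertex one does not cap the boundary branches and imposes nothing on them, matching the absence of condition (3) there.)

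Next I would glue. For a bounded edge $e$ from $x$ to $y$, Proposition~\ref{goodprop} says that $\omega_x$ restricted to a collar annulus $\CA_e$ is, up to an orientation-preserving change of good coordinate, the binomial form determined by the two invariants $\Res_{\CA_e}(\omega_x)=\Re(e)$ and the profile of $\|\;\|$ on $e$. The analogous data for $\omega_y$ on the opposite collar are $\Re(e^\op)=-\Re(e)$ and the same norm profile, the latter because $\ell$ is continuous and affine on $e$ with matching end-slopes dictated by condition (1). Hence the two collars are isomorphic as oriented annuli carrying a form, and I may glue the open star $U_x$ (that is, $\oU_x$ with its caps removed) to $U_y$ along $\CA_e$, identifying the forms. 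For a leg $l$ I simply keep the capping disc, in which $\omega$ acquires a type-$1$ point $q_l$ with $\res_{q_l}(\omega)=\Re(l)$, consistent with condition (4): a negative slope forces $\Re(l)=0$ and hence no pole. Carrying this out over all edges produces a nice $k$-analytic curve $X$ with skeleton $\Gamma$ and a global meromorphic form $\omega$ restricting to each $\omega_x$.

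Finally I would verify $\gamma(X,\Gamma,\omega)=\gamma$. The reduction curves and marked points recover $\mathcal C$ by construction; the graded reduction of $\omega$ at $x$ is $\tileta_x^\gr$ since $\omega|_{U_x}$ reduces to $\tileta_x$ and $\ell=\log\|\omega\|$ by Proposition~\ref{prop:gradedred}; and the residue function is $\Re$ because $\Re_\omega(e)=\Res_{\CA_e}(\omega)=\Re(e)$ by the residue-matching step, compatibly with Proposition~\ref{prop:residuefunction} and Lemma~\ref{branchlem}. The only genuinely delicate point throughout is the precise (not merely graded) matching of residues on $\oU_x$; everything else is the bookkeeping of assembling local models, for which the annulus classification of Proposition~\ref{goodprop} is the decisive tool.
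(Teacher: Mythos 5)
Your overall architecture is the paper's: decompose $\Gamma$ into stars, realize each $C_x$ as the reduction of a good-reduction proper curve $\oU_x$, lift $\tileta_x^\gr$ in two steps (an arbitrary lift followed by a small correction of the residues), and glue along annuli using the binomial normal form of Proposition~\ref{goodprop}. The residue-correction step, which you rightly single out as the delicate point, is essentially the paper's Lemma~\ref{reslem}; but your key assertion there --- that one can choose differentials of the third kind of norm exactly $10^{\ell(x)}$ at $x$ whose residue vectors reduce to a basis of the sum-zero hyperplane --- is stated without proof, and it is precisely the nontrivial content. The paper proves it by combining Serre duality (for existence of a form with prescribed residues) with a minimization argument in the $k$-Cartesian normed space $\Omega^1_{k(Y)/k}$, showing the minimal norm equals $\max_i|a_i|$ by analyzing the poles of the reduction of the minimizer. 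Your variant can in fact be justified more cheaply: since the $q_i$ are the constant lifts of the $p_i$ under a coefficient field $\tilk\into\kcirc$, the constant lifts of third-kind differentials on $C_x$ with integer residues $(0,\dotsc,1,\dotsc,-1)$ have norm exactly $10^{\ell(x)}$ after rescaling, and the linear algebra over $\kcirc$ then gives $\|\delta\|_x\le\max_i|a_i|<10^{\ell(x)}$ as you claim. Either way, something must be said; as written this is an assertion, not an argument.

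There is one step that fails as written: ``for a leg $l$ I simply keep the capping disc.'' The form you keep there is $\omega_x=\omega_x^0+\delta$, and while its residue at $q_l$ is the prescribed $\Re(l)$, its divisor inside that disc is not controlled. If $\tileta_x$ has a zero of order $m_l\ge 1$ at $p_l$ (the typical case, since the $q_l$ are exactly the marked zeros of the partition $\mu$), then $\omega_x^0$ vanishes to order $m_l$ at $q_l$ but $\delta$ need not vanish there at all: $\|\delta\|_x<\|\omega_x^0\|_x$ only at the Gauss point, and deep inside the disc $\|\omega_x^0\|\to 0$ while $\|\delta\|$ stays bounded below, so the sum is nonzero at $q_l$ and its $m_l$ zeros migrate to type-$1$ points of the disc lying off $\Gamma$. (The same problem already afflicts $\omega_x^0$ itself if it is obtained by lifting a rational expression $\tilde f\,d\tilde g$ without divisor control, which is why Lemma~\ref{liftlem} insists on the constant lift with $\divisor(\eta)=\sum m_iq_i$.) Then $\divisor(\omega)\not\subset\Gamma$, so $\Gamma$ is not a skeleton of $(X,\omega)$ and $\gamma(X,\Gamma,\omega)\ne\gamma$. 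The fix is the one you already use for bounded edges and the paper uses for legs as well: truncate the leg disc along a collar annulus, put $\omega_x$ in the binomial form $(c_nt^n+\Re(l))\frac{dt}{t}$ there via Proposition~\ref{goodprop}, and re-glue a standard open disc carrying the explicit form $(c_ns^n+\Re(l))\frac{ds}{s}$, whose divisor on the disc is exactly $(n-1)\cdot\{0\}$. With these two repairs your argument coincides with the paper's proof.
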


\subsection{Relation to \cite{BCGGM}}
Finally, let us explain the relation to other lifting results. We will use the notation from \S\ref{compactsec}. Let $\tilk$ be an algebraically closed field of residual characteristic zero and $k=\widehat{\cup_n\tilk((t^{1/n}))}$.

\subsubsection{The crude lifting theorem}\label{crude}
\cite[Theorem~1.3]{BCGGM} asserts that a $\tilk$-point $\gamma_0=(\tilX,p,\tilomega)$ of $\PP\Omega\oCM_{g,n}$ belongs to $K\CM_{g,\mu}$ if and only if it lifts to a tropical reduction datum $\gamma$ over $\tilk$. In {\em loc.cit.}, this was deduced from \cite[Proposition~4.8]{BCGGM}, and it follows from our results in a similar way. Indeed, if $\gamma_0\in K\CM_{g,\mu}$, then there exists a morphism $\Spec(\kcirc)\to K\CM_{g,\mu}$ which takes the closed point to $\gamma_0$ and the generic point to $(X,q,\omega)$ lying in $\PP\Omega\CM_{g,n}$. Applying Theorem~\ref{thm:tropicalization} to the pullback of $(X,q,\omega)$ to $k$ we obtain a tropical reduction datum $\gamma$ which lifts $\gamma_0$. Conversely, if $\gamma_0$ lifts to a tropical reduction datum $\gamma$ then we can lift $\gamma$ to a tuple $(X,q,\omega)$ over $k$ by Theorem~\ref{thm:lifting}. The induced morphism $\Spec(k)\to\PP\Omega\CM_{g,n}$ extends to a morphism $\Spec(\kcirc)\to K\CM_{g,\mu}$ by the properness of the target and the valuative criterion. Thus, $\gamma_0$ is the image of the closed point in $K\CM_{g,\mu}$.

\subsubsection{The fine lifting theorems}
Note that \cite[Proposition 4.8]{BCGGM} is a precise analogue of Theorem~\ref{thm:lifting}, but we construct a formal lifting while \cite{BCGGM} constructs a lifting to a complex analytic family. It seems that neither result implies another one in a simple way, though our proof can perhaps be adapted to produce a complex analytic family too by lifting $\gamma$ to the subfield of convergent power series in $\cup_n\CCC((t^{1/n}))$. This mainly reduces to the check that the process for finding good coordinates in Proposition~\ref{goodprop} below preserves complex analytic convergence of the coefficients, but we will not pursue this direction.

Note also that equivalence of the two fine lifting theorems follows from the very difficult main result of \cite{BCGGM2} which constructs a finer proper moduli space whose points are described by tropical reduction data. This is done by the same argument as used in \S\ref{crude}.

\section{Proofs}\label{proofs}
\subsection{Good coordinates on analytic annuli}\label{sec:goodcoord}
Let $\CA$ be an oriented annulus of modulus $0<r<1$ and $\omega$ a differential form having neither zeros nor poles on $\CA$.

\begin{defin}
An analytic coordinate $t$ on $\CA$ is called {\em good} with respect to $\omega$ if either $\omega= c_0\frac{dt}{t}$ or $\omega=(c_nt^n+c_0)\frac{dt}{t}$, $n\ne 0$, and $|c_nt^n|_x>|c_0|_x$ for all $x\in\CA$.
\end{defin}

\begin{rem}
If $\omega=(c_nt^n+c_0)\frac{dt}{t}$ with $n\ne 0$ then $n$ is nothing but minus the slope of the level function $\ell(x):=\log\|\omega\|_x$ on the skeleton of $\CA$. Similarly, if $\omega= c_0\frac{dt}{t}$ then the level function is constant on the skeleton of $\CA$.
\end{rem}

\begin{prop}\label{goodprop}
If a form $\omega$ has neither zeros nor poles on an oriented analytic (semi-)annulus $\CA$, then $\CA$ admits a good coordinate with respect to $\omega$.
\end{prop}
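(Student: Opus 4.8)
The plan is to reduce everything to the structure of units on an annulus and then to normalize $\omega$ by a convergent sequence of changes of coordinate, the whole argument hinging on the assumption $\chara(\tilk)=0$. First I would fix an initial coordinate $t$ with $\CA=\CM(k\{t,rt^{-1}\})$ compatible with the orientation and write $\omega=f\frac{dt}{t}$. Since $\frac{dt}{t}$ has neither zeros nor poles, the hypothesis on $\omega$ means that $f$ is an invertible analytic function on $\CA$. The valuation polygon of a unit on an annulus has a single slope, because a break point would force a zero of $f$; hence there is a unique monomial $c_Nt^N$ strictly dominating every other term $c_jt^j$ at every point of $\CA$, where $N$ is the slope of the level function and $|c_Nt^N|_x=\|\omega\|_x$ on the skeleton. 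The case of a semi-annulus is identical.

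If $N=0$ the dominant term is the constant $c_0$, and I can finish in one exact step: set $H:=\sum_{j\ne 0}\frac{c_j}{j c_0}t^j$, which converges with $|H|_x<1$ precisely because in residual characteristic zero every integer $j$ is a unit, and put $s:=t\exp(H)$. Then $\frac{ds}{s}=\big(1+\frac{1}{c_0}\sum_{j\ne 0}c_jt^j\big)\frac{dt}{t}=\frac{1}{c_0}\omega$, so $\omega=c_0\frac{ds}{s}$ is already good. The substantive case is $N=n\ne 0$, which I would treat by successive approximation. I measure the defect by the relative error $\rho:=\sup_{x}\max_{j\ne 0,n}|c_jt^j|_x/|c_nt^n|_x<1$, i.e. the size of all terms other than the dominant monomial and the residue $c_0=\Res_\CA(\omega)$. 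Seeking a new coordinate with $t=s\exp(\phi)$, one has $t^j=s^je^{j\phi}$ and $\frac{dt}{t}=(1+s\phi')\frac{ds}{s}$; the choice $\phi:=-\sum_{j\ne 0,n}\frac{c_j}{c_n j}s^{j-n}$ cancels, to first order, exactly the offending terms through the dominant contribution $c_ns^n(n\phi+s\phi')$. This $\phi$ is analytic with $|\phi|_x\le\rho$, so $s=t\exp(\phi)$ is a genuine coordinate, and $c_0$ is unchanged because $\Res_\CA$ is invariant under orientation-preserving coordinate changes.

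The heart of the matter, and the step I expect to be the main obstacle, is the convergence of this iteration, so I would bound the new relative error $\rho'$ carefully. Every nonlinear contribution coming from $c_ns^ne^{n\phi}(1+s\phi')$ and from the cross terms $c_js^je^{j\phi}$ with $j\ne 0,n$ is of relative size at most $\rho^2$; here the ultrametric together with $\chara(\tilk)=0$ is essential, since $|e^{n\phi}-1-n\phi|\le|\phi|^2$ and $|s\phi'|\le|\phi|$ because all factorials and integers are units, so every implied constant is $\le 1$. The only genuinely first-order new error is produced by the interaction $c_0\cdot s\phi'$, whose relative size is at most $\theta\rho$, where $\theta:=\sup_x|c_0|_x/|c_ns^n|_x<1$ is a fixed constant coming from the strict dominance. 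Thus $\rho'\le\max(\theta,\rho)\,\rho<\rho$, so the errors decrease geometrically, $\rho_k\to 0$.

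Finally I would assemble the limit. The coordinate changes $t=s_1\exp(\phi_1)$, $s_1=s_2\exp(\phi_2),\dots$ with $|\phi_k|_x\le\rho_k\to 0$ compose to a convergent analytic coordinate $s$ on $\CA$, in which the $j\ne 0,n$ coefficients vanish and the residue is still $c_0$, while the dominant coefficient converges to some $c$ with $|cs^n|_x=\|\omega\|_x$. Hence $\omega=(cs^n+c_0)\frac{ds}{s}$ with $|cs^n|_x>|c_0|_x$ for all $x\in\CA$, the strictness holding everywhere because equality at a point would produce a zero of $\omega$. This is the desired good coordinate, and the two invariants appearing in it are exactly $\Res_\CA(\omega)$ and the norm $\|\omega\|$ on the skeleton.
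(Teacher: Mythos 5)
Your proof is correct and takes essentially the same route as the paper: after isolating the dominant monomial of the unit $f$, you settle $n=0$ by one exponential integration and $n\neq 0$ by iterated multiplicative coordinate changes with geometrically decreasing relative error (the paper's single step uses an $n$-th root to make $\omega-c_0\frac{ds}{s}$ exactly monomial where yours uses $\exp(\phi)$ to cancel the offending terms to first order, a cosmetic difference). The one detail you elide is that on an open or semi-open annulus the suprema $\rho$ and $\theta$ need not be bounded away from $1$, so the iteration should be run on closed subannuli and the resulting coordinates glued using the canonicity of the construction, exactly as the paper notes in the first sentence of its proof.
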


\begin{proof}
We will assume that $\CA$ is closed. Since the construction we present is canonical, in the case of an open (semi-)annulus it produces a compatible set of good coordinates on all closed subannuli, and hence also on $\CA$. Pick any coordinate $s$ on $\CA$ compatible with the orientation. Then $\omega=\sum a_is^i\frac{ds}{s}$. Since $\omega$ has neither zeros nor poles on $\CA$, there exists $n\in\ZZ$ such that
\begin{equation}\label{eq:ineqm1}
\left|a_ns^n\right|_x>\left|a_is^i\right|_x
\end{equation}
for all $x\in\CA$ and all $i\ne n$.

The case $n=0$ is easy, and we explain it first. We are looking for a unit $u(s)$ such that the coordinate $t=su(s)$ is good, i.e., $u$ is a solution of the differential equation
$\sum a_is^i\frac{ds}{s}=\omega=a_0\frac{dt}{t}=a_0\frac{ds}{s}+a_0\frac{du}{u},$
or equivalently, $\frac{du}{u}=\sum_{i\ne 0}\frac{a_i}{a_0}s^i\frac{ds}{s}$. The later can be solved explicitly by setting $u(s):=\exp(\sum_{i\ne 0}\frac{a_i}{ia_0}s^i)$, which is a unit converging on $\CA$ by \eqref{eq:ineqm1}.

Assume now that $n\ne 0$. Unlike the case $n=0$, we cannot write the coordinate $t$ explicitly. Instead, we construct it by a converging sequence of approximations. For a coordinate $t$, set
$$\epsilon_\omega(t):=\frac{\|\omega-(c_nt^n+c_0)\frac{dt}{t}\|_\CA}{\|c_nt^n\|_\CA}<1,$$ where $\omega=\sum c_it^i\frac{dt}{t}$. It is sufficient to construct a coordinate $t$ for which $\epsilon_\omega(t)=0$.

As a first approximation, we construct a coordinate $s_1$ for which $$\omega-a_0\frac{ds}{s}=a_ns_1^n\frac{ds_1}{s_1}.$$ It is given by $s_1=su_1$, where $u_1=\sqrt[n]{\sum_{i\ne 0}\frac{n}{i}\frac{a_is^i}{a_ns^n}}$ is unit on $\CA$ by \eqref{eq:ineqm1}. It follows that
$$\omega=a_ns_1^n\frac{ds_1}{s_1}+a_0\frac{ds}{s}=(a_ns_1^n+a_0)\frac{ds_1}{s_1}-a_0\frac{du_1}{u_1}=\sum a'_i s_1^i\frac{ds_1}{s_1}.$$
By construction, $\|u_1-1\|\le \epsilon_\omega(s)<1$ and $\|du_1\|\le \epsilon_\omega(s)<1$. Thus, $|a_n'|=|a_n|$ and hence
$$\epsilon_\omega(s_1)\le\frac{\left\|a_0\frac{du_1}{u_1}\right\|_\CA}{\|a'_ns_1^n\|_\CA}\le\epsilon_\omega(s)\frac{|a_0|}{\|a_ns^n\|_\CA}\le(\epsilon_\omega(s))^2.$$

Repeating the construction above, we produce a sequence of coordinates $s_j$ on $\CA$ and a sequence of units $u_j$ satisfying the following:
$$
\begin{array}{c}
    s_j=s\prod_{i=1}^j u_i, \\
    \epsilon_\omega(s_j)\le (\epsilon_\omega(s))^{j+1}, \\
    \|u_j-1\|\le (\epsilon_\omega(s))^j.
\end{array}
$$
Since $\epsilon_\omega(s)<1$, it follows that the products $\prod_{i=1}^j u_i$ converge to a unit $u$, the coordinates $s_j$ converge to a coordinate $t=su$, and $\epsilon_\omega(t)=\lim \epsilon_\omega(s_j)=0$ as needed.
\end{proof}

\subsection{Proof of Theorem~\ref{thm:harmonicity}}\label{proofharmonic}
We reduce the assertion to the classical statement about algebraic curves.
Let $e\in\Star(x)$ be an edge, and $\CA$ an annulus whose skeleton is supported on $e$ and has compatible orientation. By Proposition~\ref{goodprop}, there exists a good analytic coordinate $t$ on $\CA$, i.e., $\omega|_{\CA}=(c_nt^n+c_0)\frac{dt}{t}$, where $c_0=\Res_\CA(\omega)$. Re-scaling $t$ we can assume that $\CA=\CM(k\{t,rt^{-1}\})$, where $r$ is the modulus of $\CA$. Let us truncate the curve $X$ at the end point of $\CA$ and glue in an open unit disc $D$ along $\CA$ by identifying the coordinate $s$ of the disc with $t$. By construction, $\omega|_{\CA}$ extends to $D$ by the formula $\omega|_{D}=(c_ns^n+c_0)\frac{ds}{s}$, and this form has residue $c_0$ at the origin and is regular outside the origin. Repeating this process for all edges in $\Star(e)$ we obtain a complete analytic curve $Y$ with a form $\omega_Y$ having total residue $\sum_{e\in\Star(x)}\Re_\omega(e)$. By GAGA, $(Y,\omega_Y)$ is the analytification of an algebraic curve with differential. And hence its total residue vanishes.

\subsection{Proof of Theorem~\ref{thm:lifting}}
The general plan is as follows: cut $\gamma$ into stars, lift the datum to star-shaped curves, and then glue the local lifts.

\subsubsection{Lifting the form}\label{liftform}
We start with lifting a local datum, and ignore residues at first. In this case we can just use a constant family over $\kcirc$ and restrict to the generic fiber.

\begin{lem}\label{liftlem}
Assume that $C$ is a smooth proper curve over $\tilk$ with a finite set $p=\{p_1\.p_n\}$ of closed points and a meromorphic differential form $\tileta$ such that $\divisor(\tileta)=\sum_i m_ip_i$ for $m_i\in\ZZ$. Then there exists a nice proper $k$-curve $Y$ with reduction $C$, a set of $k$-points $q=(q_1\.q_n)$ lifting $p$, and a lift $\eta$ of $\tileta$ such that $\divisor(\eta)=\sum_i m_iq_i$.
\end{lem}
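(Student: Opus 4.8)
The plan is to use the crudest possible construction, a constant family over $\kcirc$, which suffices precisely because at this stage we impose no condition on the residues of $\eta$.

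First I would produce a coefficient field for $\kcirc$. Since $\chara(\tilk)=0$, the henselian valuation ring $\kcirc$ is of equal characteristic zero, and hence admits a section $\iota\:\tilk\into\kcirc$ of the reduction map $\kcirc\to\tilk$: one lifts a transcendence basis of $\tilk$ over $\QQ$ to units of $\kcirc$ (any lift of a transcendental element stays transcendental, since otherwise its reduction would be algebraic), and then lifts the remaining algebraic, and automatically separable, part uniquely by henselianity. This is the only step where $\chara(\tilk)=0$ is genuinely used.

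Next I would form the constant model $\mathcal{Y}:=C\times_{\tilk,\iota}\kcirc$. It is a flat proper $\kcirc$-scheme whose special fiber is $C$; as $C$ is smooth over $\tilk$, the model $\mathcal{Y}$ is smooth over $\kcirc$. Its generic fiber $\mathcal{Y}_k=C\times_{\tilk}k$ (base change along $\iota$ followed by $\kcirc\into k$) is a smooth proper connected $k$-curve, connectedness being automatic because $\tilk$ is algebraically closed. I set $Y:=\mathcal{Y}_k^{\an}$, which is then a nice proper $k$-curve whose reduction $\mathcal{Y}\otimes_{\kcirc}\tilk=C$ is as required; let $x$ denote the unique point of type $2$ of $Y$ with $C_x=C$. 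Each $p_i\in C(\tilk)$ determines a constant section of $\mathcal{Y}$ over $\kcirc$, and hence a $k$-point $q_i\in Y$ reducing to $p_i$.

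Finally I would produce the form. The meromorphic form $\tileta$ is a rational section of $\Omega_{C/\tilk}$, and its pullback $\eta:=\tileta\otimes 1$ is a rational section of $\Omega_{\mathcal{Y}_k/k}=\Omega_{C/\tilk}\otimes_{\tilk}k$, i.e.\ a meromorphic form on $Y$. Because the formation of $\Omega$ and of divisors of rational sections commutes with the flat base change $\tilk\to k$, one gets $\divisor(\eta)=\sum_i m_iq_i$. It then remains to check that $\eta$ lifts $\tileta$ at $x$: writing $\tileta$ in a local coordinate of $C$ defined over $\iota(\tilk)$, the coefficients of $\eta$ lie in $\iota(\tilk)$ and are therefore units reducing to the corresponding coefficients of $\tileta$; hence $\|\eta\|_x=1$ and the scaled reduction $\tileta_x$ equals $\tileta$. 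There is no serious obstacle here: once the coefficient field $\iota$ is in hand everything is formal, the point being that the constant family trivially realizes both the divisor and the reduction simultaneously. The harder problem, prescribing the exact residues $\Re(e)\in k$ rather than merely their graded reductions, is precisely what is deferred to the subsequent gluing step.
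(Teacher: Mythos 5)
Your proposal is correct and follows essentially the same route as the paper: both produce a coefficient field $\tilk\into\kcirc$ (the paper simply cites ``the theory of coefficient fields'', you supply the standard transcendence-basis-plus-henselianity argument) and then take $Y$, $q$, $\eta$ to be the constant family $C\otimes_\tilk k$, $p\otimes_\tilk k$, $\tileta\otimes_\tilk k$. Your additional verifications of the divisor and of the reduction at the type-$2$ point are correct fillings-in of details the paper leaves implicit.
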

\begin{proof}
By the theory of coefficient fields, the homomorphism $\kcirc\to\tilk$ admits a section $\tilk\into\kcirc$. This yields a homomorphism $\tilk\into k$, and we simply take our lifts $Y$, $q$ and $\eta$ to be the analytifications of $C\otimes_\tilk k$, $p\otimes_\tilk k$ and $\tileta\otimes_\tilk k$.
\end{proof}

\begin{rem}\label{liftrem}
One could wonder if for a given lift $Y$ of $C$, there exists a lift $\eta$ of the differential form preserving the profile of zeros and poles. By \cite[Theorem~1]{KZ03}, there exist components of the moduli space of stratified differentials supported over the hyperelliptic locus of the moduli space of curves. Thus, it may happen that $\tileta$ is liftable only for particular choices of the lift $Y$. Notice that if $p_1\.p_r$ are the poles of $\tileta$ then for any lift $(Y,q_1\.q_r)$ of $(C,p_1\.p_r)$ one can lift $\tileta$ so that $\sum_{i=1}^rm_iq_i$ is the polar part of $\divisor(\eta)$. Indeed, if $r>0$ then the lift exists because the obstruction $H^1\left(C,\Omega^1_{C/\tilde{k}}(-\sum_{i=1}^r m_ip_i)\right)$ vanishes by Serre's duality. If $r=0$, then consider the smooth model $f\:\CY\to\Spec(\kcirc)$ with the closed fiber $C$, and note that $\tileta$ lifts to a section of $\Omega^1_{\CY/\kcirc}$ because $f_*(\Omega^1_{\CY/\kcirc})$ is a vector bundle. However, such lift provides no control on the zeros of $\eta$, and, in particular, the zeros of high multiplicity can split.
\end{rem}

\subsubsection{Lifting the residues}\label{liftres}
In order to correct the residues locally at $y$ we will use the following result.

\begin{lem}\label{reslem}
Assume that $Y$ is a proper nice $k$-curve with a good reduction $C_y$, $y$ is the point of type $2$ mapped to the generic point of $C_y$ by the reduction map, $q_1\.q_m\in Y$ points of type $1$ with distinct reductions $p_1\.p_m\in C_y$, and $a_1\.a_m\in k$ elements summing up to 0. If not all $a_i$ vanish then there exists $\eta\in H^0\left(Y,\Omega^1_{Y/k}(\sum_{i} q_i)\right)$ such that $\|\eta\|_y=\max_i|a_i|$ and $\res_{q_i}(\eta)=a_i$ for all $i$.
\end{lem}
\begin{proof}
Let $W$ be the set of differential forms $\eta\in H^0\left(Y,\Omega^1_{Y/k}(\sum_{i} q_i)\right)$ for which $\res_{q_i}(\eta)=a_i$ for all $i$. Recall that by Serre's duality the sequence
$$0\to H^0(Y,\Omega^1_{Y/k})\to H^0\left(Y,\Omega^1_{Y/k}\left(\sum q_i\right)\right)\to \sum_{i=1}^mk\xrightarrow{\sum} k\to 0$$
is exact. Thus, $W$ is not empty. Furthermore, $W=\eta_0+H^0(Y,\Omega^1_{Y/k})$, where $\eta_0\in W$ is any form. Set $l_i:=[y,q_i]\subset Y$. Then,  by Proposition~\ref{prop:residuefunction}, $\Re_\eta(l_i)=\res_{q_i}(\eta)=a_i$ for all $i$ and $\eta\in W$. In particular, it follows that $\|\eta\|_y\ge\max_i|a_i|$, and our next goal is to minimize $\|\eta\|_y$.

Recall that by \cite[Definition~2.4.3/2]{BGR}, a normed $k$-vector space $V$ is {\em $k$-Cartesian} if and only if any finite dimensional subspace $U\subseteq V$ is {\em strictly closed}, i.e., for any $v\in V$ there exists a vector $u\in U$ closest to $v$. Furthermore, $k(Y)$ with the norm $|\ |_y$ is $k$-Cartesian by \cite[Proposition~5.3.3/2]{BGR}, and multiplication by a meromorphic form $\omega$ with $\|\omega\|_y=1$ induces an isometry between this space and the space $\Omega^1_{k(Y)/k}$ with the norm $\|\ \|_y$. Thus, there exists a form $\eta_1\in H^0(Y,\Omega^1_{Y/k})$ such that $\|\eta_0-\eta_1\|_y$ is minimal possible. Set $\eta:=\eta_0-\eta_1$.

We claim that the graded reduction $\tileta^\gr$ is not regular. Otherwise, by the argument from Remark~\ref{liftrem} it can be lifted to a regular form $\eta'\in H^0(Y,\Omega^1_{Y/k})$ and then $\|\eta-\eta'\|_y<\|\eta\|_y$, which contradicts the minimality of $\|\eta\|_y$. Let $p$ be a pole of $\tileta^\gr$, then its preimage $D_p\subset Y$ contains a pole $q_i$ of $\eta$. Since $q_i$ is a simple pole and no other poles lie in $D_p$, it follows that $p$ is a simple pole too. The graded reduction of $a_i=\Re_\eta(l_i)$ equals $\res_p(\tileta)$ by Proposition~\ref{prop:residuefunction}, and the latter does not vanish since the pole is simple. Thus, $\|\eta\|_y=|a_i|$, which concludes the proof.
\end{proof}

\subsubsection{The construction of local lifts}
Let $y\in\Gamma$ be a vertex of type $2$, and $p_1\.p_n\in C_y$ the points corresponding to $\Star(y)$. Let $\oC_y\setminus C_y=\{p_{n+1}\.p_m\}$, where $\oC_y$ is the smooth compactification of $C_y$. By Lemma~\ref{liftlem}, there exists a smooth proper $k$-curve $\oY$ with reduction $\oC_y$, a collection of $k$-points $q=(q_1\.q_m)$ that lifts $p$, and a form $\eta$ that lifts the scaled reduction $\tileta_y$ such that the support of  $\divisor(\eta)$ belongs to $q$.

For $i=1\. m$, set $l_i:=[y,q_i]$ and $a_i:=\Re_\eta(l_i)$, where $y\in\oY$ is the preimage of the generic point of $C_y$. After multiplying $\eta$ by an appropriate $c\in k^\times$, we may assume that the graded reduction of $\eta$ is $\tileta_y^\gr$. It follows that for $i\le n$, the graded reductions of $a_i$ and $\Re(l_i)$ are equal to $\res_{p_i}(\tileta_y^\gr)$, and hence $a'_i=\Re(l_i)-a_i$ satisfies $|a'_i|<\|\eta\|_y$. In the non-boundary case, $m=n$ and the $a'_i$'s sum up to zero. Otherwise, set $a'_{n+1}:=-\sum_{i=1}^na'_i$ and $a'_i:=0$ for $i\ge n+2$. Thus, in any case, the $a'_i$'s sum up to zero.

If $a'_i=0$ for all $i$ then set $\omega_y:=\eta$. Otherwise, by Lemma~\ref{reslem}, there exists a form $\eta'$ on $\oY$ with at worst simple poles in $q$ such that $\Re_{\eta'}(l_i)=a'_i$ for all $i$ and $\|\eta'\|_y<\|\eta\|_y$. Set $\omega_y:=\eta+\eta'$. We constructed a lift $\omega_y$ of $\tileta_y^\gr$ to $\oY$ with desired residues around $y$, namely $\Re_{\omega_y}(e_i)=a_i$ for all $i$, where $e_i\in \Br(y)$ denotes the branch of $l_i$. Note however, that $\omega_y$ may have zeros outside of $q$.

We consider all open discs of $\oY\setminus\{y\}$ as having radius one. Let $Y$ be the nice curve obtained from $\oY$ by removing the open discs of radius one around $q_{n+1}\.q_m$, and removing large enough open discs around $q_1\.q_n$ of radius smaller than 1 so that the slope of $\|\omega_y\|$ along the edge $e_i\subset l_i\cap Y$ is constant, and the length of $e_i$ is smaller than one half of the length of the corresponding edge in $\Star(y)$. Then $(Y,y)$ is a star-shaped curve with a star skeleton $\Gamma_y$ whose edges are $e_1\.e_n$, in particular, $\Gamma_y$ is obtained from $\Star(y)$ by shortening the edges. Furthermore, $\omega_y$ is a differential form on $Y$ such that $\gamma(Y,\Gamma_y,\omega_y)$ is the restriction of $\gamma$ onto the subgraph $\Gamma_y$. Strictly speaking, one also inserts $\PP^1$'s in the ends of $e_i$, etc., but these details are not essential, since we can always cut the edges further.

\subsubsection{The patching of local lifts}
In the last step, we show how to patch the star-shaped nice $k$-analytic curves constructed in the previous step in order to get a nice $k$-analytic curve representing $\gamma$. There are two types of patches we should do.

First, assume that $l=[y,q]$ is a leg. It corresponds to an annulus $\CA$ in $Y\setminus\{y\}$ with a form $\omega_y$, and by Proposition~\ref{goodprop}, there exists a good analytic coordinate $t$ on $\CA$ such that $\omega_y|_{\CA}=(c_nt^n+\Re(e))\frac{dt}{t}$, where $n$ is minus the slope of the level function along the oriented edge $e$, and $c_n=0$ if $n=0$. In addition, if $\Re(e)\neq 0$, then $n\le 0$. After rescaling $t$, we may assume that $\CA=\CM(k\{t,rt^{-1}\})$. Consider an open unit disc $D_q$ with a coordinate $s$ and a form $\omega_q=(c_ns^n+\Re(e))\frac{ds}{s}$ and glue $Y$ and $D_q$ along $\CA$ via $t=s$. Clearly, $\omega_y$ and $\omega_q$ agree on $\CA$.

Next, assume that $e\in E(\Gamma)$ is bounded, and set $y:=\ft(e)$ and $z:=\fh(e)$. The cases $y=z$ and $y\neq z$ are similar, so assume that $y\neq z$. Consider the star-shaped local lifts $(Y,\omega_y)$ and $(Z,\omega_z)$ with open annuli $\CA_Y\subset Y$ and $\CA_Z\subset Z$. Then the orientation of $\CA_Y$ is compatible with $e$, and of $\CA_Z$ is not. By our assumption, each of them has modulus larger than $\sqrt{r}$, where $r=10^{-|e|}$. By Proposition~\ref{goodprop}, there exist good analytic coordinates $t$ and $s$ on the annuli such that $\omega_Y|_{\CA_Y}=(\alpha t^n+\Re(e))\frac{dt}{t}$ and $\omega_Z|_{\CA_Z}=(\beta s^{-n}+\Re(e^\op))\frac{ds}{s}$, where $n$ is minus the slope of the level function along $e$, and $\alpha=\beta=0$ if $n=0$. Choose an open annulus $\CA$ of modulus $r$ with coordinate $\tau$ and a form $\omega_\CA=(\alpha \tau^n+\Re(e))\frac{d\tau}{\tau}$ and glue it to $\CA_Y$ and $\CA_Z$ via $\tau=t$ and $\tau=s^{-1}(-\beta/\alpha)^{1/n}$.

Once all these gluings are done, we obtain a nice curve $X$ with a skeleton $\Gamma\subset X$ such that the associated metrized curve complex is $\CC$. In addition, the local forms glue to a regular form $\omega$ on $X$, such that $\ell(\omega)=\ell_\gamma$, the graded reduction at each vertex $y\in V(\Gamma)$ is $\tileta_y^\gr$, and the residue along each edge $e\in E(\Gamma)$ is $\Re(e)$. Thus, $\gamma(X,\Gamma,\omega)=\gamma$, as required.

\section{The stacky tropical reduction}\label{sec5}
Although in the proof of the main results we did not need it, there is a natural stacky reduction datum that one may wish to take into account. In a sense, the situation is similar to the proof of the correspondence theorem in \cite{T12}. Starting with a regular tropical reduction of a curve with a map to a toric variety, one can prove a lifting result, but in order to have a one-to-one correspondence between tropical reductions and algebraic objects satisfying appropriate constraints, one has to consider richer reductions: either stacky reductions or the reductions in the category of log schemes.

\subsection{Good formal coordinates}
Let us describe the type of a stacky structure that appears naturally in the setting of curves with differentials. Let $C$ be a smooth curve over the residue field $\tilde{k}$, $\omega_C$ a non-zero meromorphic differential form on $C$, and $p\in C$ a closed point. Set $n:=\ord_p(\omega_C)+1\in\ZZ$ to be the logarithmic order of $\omega_C$ at $p$. We say that a formal coordinate $t\in \widehat{\CO}_{C,p}$ is {\em good} if $\omega_C=(c_nt^n+\res_p(\omega_C))\frac{dt}{t}$ for some $c_n\in k$. Similarly to Proposition~\ref{goodprop}, one can prove that good formal coordinates exist. In the complex case they are even holomorphic, see \cite[Thorem~4.1]{BCGGM2}. For any $n$, set $G_n:=\GG_m\ltimes_n \GG_a$, where $\GG_m$ acts on $\GG_a$ by $\lambda(\mu):=\lambda^{-n}\mu$.
\begin{prop}\label{prop:goodfortor}
The set of good formal coordinates admits a natural transitive action of the group $G_n(\tilk)$. If $n<0$ then the action is free. Otherwise, it factors through the free action of $\GG_m(\tilk)=G_n(\tilk)/\GG_a(\tilk)$.
\end{prop}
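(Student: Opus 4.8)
The plan is to describe the set of good formal coordinates explicitly and to realize $G_n(\tilk)$ as exactly the group relating them. I fix one good coordinate $t$, so $\omega_C=(c_nt^n+a)\frac{dt}{t}$ with $a=\res_p(\omega_C)$, and I first record what the sign of $n$ forces: since a positive logarithmic order means $\omega_C$ is regular at $p$, we have $a=0$ whenever $n>0$; when $n=0$ the form has a simple pole, the residue condition forces the constant term $c_0$ to vanish, so $\omega_C=a\tfrac{dt}t$ with $a\ne 0$; and $c_n\ne 0$ when $n<0$. Any other formal coordinate is $s=tv$ with $v\in\widehat{\CO}_{C,p}^{\times}$, and using $\frac{ds}{s}=\frac{dt}{t}+\frac{dv}{v}$ together with $\omega_C-a\frac{dt}{t}=d(\frac{c_n}{n}t^n)$ (for $n\ne 0$), I would reduce the goodness of $s$ to the single identity
\[ \frac{c_n}{n}t^n-\frac{d_n}{n}s^n=a\log(v)+C \]
for some constant $C$, where $\log(v)$ converges formally because $\chara(\tilk)=0$. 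This is the formal analogue of the equation solved by successive approximations in Proposition~\ref{goodprop}, and I would treat its solvability in the same spirit.

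Next I would define the $G_n(\tilk)$-action through two one-parameter families. The torus $\GG_m(\tilk)=\tilk^\times$ acts by rescaling, $\lambda\cdot t:=\lambda t$; a direct substitution shows $\lambda t$ is again good, with leading coefficient changed to $\lambda^{-n}c_n$ and the same residue $a$. The additive group $\GG_a(\tilk)=\tilk$ acts, when $n<0$, by the genuine unipotent coordinate change $t\mapsto t+\mu t^{1-n}$ (here $1-n\ge 2$), which keeps $d_n$ fixed and shifts the constant $C$ by $\mu$; when $n\ge 0$ there is no such valid coordinate change and I would let $\GG_a$ act trivially. Composing the two families and comparing the two orders of composition yields the twisted compatibility $\lambda\mu\lambda^{-1}=\lambda^{-n}\mu$ defining $G_n=\GG_m\ltimes_n\GG_a$ (the inverse sign being the usual discrepancy between acting on coordinates and on the parameter), so the combined assignment is a genuine $G_n(\tilk)$-action.

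For transitivity and freeness when $n<0$ I would analyze the key identity order by order in $t$. Cancellation of the polar part (orders $n,\dots,-1$) forces $v_0^n=c_n/d_n$ and pins down the next $-n-1$ coefficients of $v$, so every leading value $d_n\in\tilk^\times$ is realized by a scaling; matching the regular part then determines all remaining coefficients of $v$ recursively in terms of the single free constant $C$. Hence for $n<0$ the good coordinates are parametrized bijectively by $(v_0,C)\in\tilk^\times\times\tilk$, which is precisely the underlying set of $G_n(\tilk)$, and one reads off simple transitivity: the stabilizer of $t$ is trivial, because $v_0$ forces $\lambda=1$ and then the freeness of the $C$-parameter forces $\mu=0$.

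Finally, for $n\ge 0$ the additive freedom collapses. When $n>0$ we have $a=0$, the logarithmic term is absent, and both $t^n,s^n$ vanish at $p$, so $C=0$ and $s^n=\frac{c_n}{d_n}t^n$, i.e.\ $s=\lambda t$; when $n=0$ the identity degenerates to $\frac{ds}{s}=\frac{dt}{t}$, again giving $s=\lambda t$. In both cases the good coordinates are exactly $\{\lambda t:\lambda\in\tilk^\times\}$, so there is no room for a non-trivial unipotent action, the $\GG_a$-part acts trivially, and the induced action of $\GG_m=G_n/\GG_a$ is free and transitive. I expect the main obstacle to be the bookkeeping imposed by the term $a\log(v)$ when $a\ne 0$: it blocks a direct ``extract an $n$-th root'' argument and is exactly what must be controlled in the order-by-order solution, and it is also the feature one must track to see that a genuine additive degree of freedom is present precisely in the range $n<0$.
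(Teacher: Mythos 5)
Your overall strategy is the same as the paper's: write a second coordinate as $s=tv$, reduce goodness to a single equation (you integrate the paper's differential equation $(c+rt^{l})\frac{dt}{ds}=u^{l+1}(e+rs^{l})$ into the identity $\frac{c_n}{n}t^n-\frac{d_n}{n}s^n=a\log(v/v_0)+C$, an equivalent and slightly cleaner bookkeeping device), and solve order by order to find that for $n=-l<0$ the good coordinates are parametrized by the leading coefficient $v_0\in\tilk^\times$ together with one free constant, all higher coefficients being recursively determined, while for $n\ge 0$ only $v_0$ is free. This matches the paper's count exactly (there the free data are $a_1$ and $a_{l+1}$), and your treatment of the cases $n>0$ and $n=0$, as well as transitivity and freeness, is sound.

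The genuine problem is your definition of the $\GG_a$-part of the action: for $n=-l<0$ the substitution $t\mapsto t+\mu t^{l+1}$ does \emph{not} send good coordinates to good coordinates. Plugging $v=1+\mu t^{l}$ (so $v_0=1$, $d_n=c_n$) into your own key identity, the quantity $\frac{c_n}{n}t^n-\frac{d_n}{n}s^n-a\log v$ equals $-c_n\mu+\bigl(\tfrac{(l+1)c_n}{2}\mu^2-a\mu\bigr)t^{l}+\cdots$, which is not constant for $\mu\ne 0$ (even when $a=0$, since $c_n\ne 0$ and $\chara(\tilk)=0$). This is consistent with your own recursion: the coefficients of $v$ in degrees $>l$ are \emph{determined} and generically nonzero, so the truncated series does not lie in the set being acted on, and the phrase ``shifts the constant $C$ by $\mu$'' has no meaning for it. The repair is the paper's definition: $(\lambda,\mu)$ sends $t$ to the \emph{unique} good coordinate whose $(l+1)$-jet is $\lambda t+\lambda\mu t^{l+1}$, equivalently the one with prescribed $(v_0,C)$. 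With this definition the semidirect-product relation can no longer be ``read off from composing substitutions''; one must compute the $(l+1)$-jet of $\sigma'(\sigma(t))$ and compare it with that of $(\sigma\sigma')(t)$, using the fact that a good coordinate is determined by its $(l+1)$-jet. That short computation is the step your proposal omits and the paper carries out.
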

\begin{proof}
The case $n\ge 0$ is easy. Indeed, in this case $G_n(\tilk)$ acts on the set of formal coordinates via its quotient $\GG_m(\tilk)$ by the multiplication. Let $t,s$ be two good formal coordinates. If $n=0$ then $\frac{dt}{t}=\frac{ds}{s}$, which implies $t=cs$ for some $c\in\tilk^\times$. If $n>0$ then $\Res_p(\omega_C)=0$, and hence $t^{n-1}dt=cs^{n-1}ds$ for some $c\in\tilk^\times$. Thus, $t^n=cs^n$, and again, $t$ belongs to the $\tilk^\times$-orbit of $s$.

Assume now that $n=-l<0$, and set $r:=\res_p(\omega_C)$. Pick a good formal coordinate $s$. Then $\omega_C=(es^n+r)\frac{ds}{s}$ for some $e\ne 0$. In order to explain how the group $G_n(\tilk)$ acts on $s$ we shall analyze the set of good formal coordinates. Let $t=\sum_{i\ge 1}a_is^i$, $a_1\ne 0$, be the expansion of a good formal coordinate $t$. Set $u:=\frac{t}{s}$. There exists $c\in\tilk^\times$ such that $\omega_C=(ct^n+r)\frac{dt}{t}=(es^n+r)\frac{ds}{s}$, and hence
\begin{equation}
(c+rt^{l})\frac{dt}{ds}=u^{l+1}(e+rs^{l}).
\end{equation}

Let us expand this equation, and for all $k\ge 1$, compare the first contribution of $a_k$, which happens on the level of the coefficients of $s^{k-1}$:

\begin{equation}\label{eq:torgc}
(c+r(a_1s+a_2s^2+\dots)^l)(a_1+2a_2s+\dots)=\\(a_1+a_2s+\dots)^{l+1}(e+rs^{l}).
\end{equation}

If $k=1$ we get $ca_1=ea_1^{l+1}$, or equivalently, $ea_1^{l}=c$. If $2\le k\le l$ we get
\begin{equation}\label{eq:coefcomp1}
cka_k=e(l+1)a_1^{l}a_k+f_k=(l+1)ca_k+f_k,
\end{equation}
where $f_k$ belongs to the ideal generated by $a_2,\dotsc,a_{k-1}$ in $\tilk[a_1,a_2,\dotsc]$. In particular, for $k=2$, \eqref{eq:coefcomp1} is equivalent to $2ca_2=(l+1)ca_2$, and since $c\ne 0$, we get $a_2=0$. Similarly, $a_k=0$ for all $2\le k\le l$. For $k=l+1$, we get
$$cka_k+ra_1^k=e(l+1)a_1^{l}a_{l+1}+f_k+ra_1^{l+1}=(l+1)ca_{l+1}+f_k+ra_1^{l+1},$$
and since $a_2=\dots=a_{l}=0$, the latter equation is trivial.

Finally, for any $k>l+1$, comparing the coefficients of $s^{k-1}$ in \eqref{eq:torgc}  gives rise to an equation
$$cka_k+g_k=e(l+1)a_1^{l}a_k+h_k=(l+1)ca_k+h_k,$$
where $g_k$ and $h_k$ are explicit polynomials in $a_1,\dotsc a_{k-1}$. Since $k\ne l+1$ and $c\ne 0$, the coefficient $a_k$ is uniquely determined by $a_1$ and $a_{l+1}$ for all $k$.

We are ready to describe the action of $G_n(\tilk)$ on the set of good formal coordinates. An element $\sigma=(\lambda,\mu)\in (\tilk^\times)\ltimes_n\tilk$ associates to the good formal coordinate $s$ the unique good formal coordinate $\sigma(s)=\sum_{i\ge 1}a_is^i$ for which $(a_1,a_{l+1})=(\lambda,\lambda\mu)$. It remains to show that the formula above respects the group law on $G_n(\tilk)$. Pick $\sigma'=(\lambda',\mu')\in (\tilk^\times)\ltimes_n\tilk$, and let $a'_i$ be such that $\sigma'(s)=\sum_{i\ge 1}a'_is^i$. Then, $(a'_1,a'_{l+1})=(\lambda',\lambda'\mu')$, and $a_k=a'_k=0$ for all $2\le k\le l$. Therefore,
$$\sigma'(\sigma(s))=\sum_{i\ge 1}a'_i(\sigma(s))^i\equiv a'_1a_1s+(a'_1a_{l+1}+a'_{l+1}a_1^{l+1})s^{l+1}\mod s^{l+2}.$$
By definition, $\sigma\sigma'=(\lambda\lambda',\mu+\lambda^{l}\mu')$, and hence
$$(\sigma\sigma')(s)\equiv \lambda\lambda' s+(\lambda'\lambda\mu+\lambda^{l+1}\lambda'\mu')s^{l+1}\equiv \sigma'(\sigma(s))\mod s^{l+2}.$$
However, a good formal coordinate is determined by its $(l+1)$-jet in $s$. Therefore, $\sigma'(\sigma(s))=(\sigma\sigma')(s)$, which completes the proof.
\end{proof}
In the case of good coordinates on analytic annuli we have the following:
\begin{prop}
Let $\CA=\{x\in\CM(k\{s,\rho s^{-1}\})\,|\,\rho^{-1}<|s|_x<1\}$ be an open annulus, and $\omega$ a differential form on $\CA$ having neither zeros nor poles. Let $n$ be the slope of the level function associated to $\omega$. Then the set of good coordinates on $\CA$ compatible with the orientation admits a natural transitive action on $G_n(\ko)$, which is free if and only if $n\ne 0$. If $n=0$, the action factors through the natural free action of the quotient $\GG_m(\ko)$.
\end{prop}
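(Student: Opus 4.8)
The plan is to follow the proof of Proposition~\ref{prop:goodfortor} almost line by line, replacing formal coordinates at a point over $\tilk$ by analytic coordinates on $\CA$ over $k$ and keeping careful track of integrality. First I would use Proposition~\ref{goodprop} to fix one good coordinate $s$, so that $\omega=(es^n+\Res_\CA(\omega))\frac{ds}{s}$ with $|es^n|_x>|\Res_\CA(\omega)|_x$ throughout $\CA$. By the standard description of invertible functions on an annulus, every orientation-compatible coordinate realizing $\CA$ in the normalized form $\{\rho^{-1}<|\cdot|_x<1\}$ can be written $t=a_1 s(1+g)$, where $a_1 s$ is the dominant monomial and $\|g\|_\CA<1$ with $g$ having no term of degree $0$; the normalization $|t|_x\in(\rho^{-1},1)$ forces $|a_1|=1$, i.e. $a_1\in\ko^\times=\GG_m(\ko)$. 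This already exhibits the $\GG_m(\ko)$-part of the action as rescaling $t\mapsto\lambda t$, which preserves goodness because $c_nt^n=es^n$ is unchanged.

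I would then characterize goodness by exactness, exactly as in the proof that $\Res_\CA$ is well defined: $t$ is good precisely when $\omega-\Res_\CA(\omega)\frac{dt}{t}$ is a constant multiple of $d(t^n)$. Since $\frac{ds}{s}-\frac{dt}{t}=-d\log(1+g)$ is exact (the logarithm converges as $\|g\|_\CA<1$ and $\chara(k)=0$), this collapses to a single master equation relating $g$, the new leading coefficient $c$, and a constant of integration. Comparing Laurent coefficients degree by degree, just as in \eqref{eq:torgc} and \eqref{eq:coefcomp1}, I expect the recursion to determine every coefficient of $g$ from $a_1$ together with one further scalar $\mu$ --- the analogue of $a_{l+1}$ in {\em loc.cit.} --- which is the $\GG_a$-parameter. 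The two signs of $n$ are interchanged by the orientation-reversing involution exchanging the ends of $\CA$, which is compatible with the isomorphism $G_n\cong G_{-n}$, $(\lambda,\mu)\mapsto(\lambda^{-1},\mu)$; this also explains why, unlike the formal case, the parameter $\mu$ survives for $n>0$, since it now sits in a negative-degree coefficient of $g$ that has no counterpart in the formal disc. When $n=0$ the master equation degenerates to $d\log(1+g)=0$, hence $g=0$, so only $t\mapsto\lambda t$ remains and the action factors through the free action of $\GG_m(\ko)$.

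Granting the recursion, transitivity is the statement that any two good coordinates differ by a solution $(\lambda,\mu)$, freeness for $n\neq0$ is the uniqueness of that solution, and compatibility with the semidirect law of $G_n=\GG_m\ltimes_n\GG_a$ is the same jet computation as in Proposition~\ref{prop:goodfortor}, now carried out in $\ko$. The main obstacle, and the only genuinely new ingredient beyond the formal case, is integrality together with honest (not merely formal) convergence on the open annulus: I must verify that the approximation producing $g$ converges on $\CA$ and that the free scalar lands in $\GG_a(\ko)=\ko$ rather than in $\GG_a(k)$. Both follow from $\|g\|_\CA<1$: evaluating the inequality $|b_j|\,|s|_x^j<1$ on the coefficient carrying $\mu$ and letting $|s|_x$ tend to the relevant boundary value of $\CA$ yields $|\mu|\le 1$, which, together with the normalization $|a_1|=1$, pins the structure group down to $G_n(\ko)$.
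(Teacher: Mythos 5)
Your overall strategy --- normalize one good coordinate $s$ via Proposition~\ref{goodprop}, write any other orientation-compatible coordinate as $t=a_1s(1+g)$ with $|a_1|=1$ and $\|g\|_\CA<1$, and then run the coefficient recursion of Proposition~\ref{prop:goodfortor} while tracking integrality --- is the same as the paper's, and several of your observations (the $n=0$ case, the integrality of $\mu$ obtained by letting $|s|_x$ tend to the boundary, the inversion exchanging $n$ and $-n$) match the paper's proof. But there is a genuine gap at the central step. On an annulus the expansion $t=\sum_{i\in\ZZ}a_is^i$ is doubly infinite, so when you ``compare Laurent coefficients degree by degree, just as in \eqref{eq:torgc} and \eqref{eq:coefcomp1}'', the coefficient of $s^{k-1}$ in $u^{l+1}$ and its relatives is an infinite convergent sum involving \emph{all} of the $a_i$, not a polynomial in $a_1,\dotsc,a_{k-1}$; the recursion of the formal proof is not well-founded here, and ``I expect the recursion to determine every coefficient of $g$'' is precisely the assertion that needs proof. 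The paper spends most of its argument on exactly this point: after reducing to $n<0$ by inversion, it shows that $a_i=0$ for all $i\le 0$ by an analytic dominant-term argument (after shrinking $\CA$, pick $p\le 0$ with $|a_p|$ strictly maximal among the non-positive indices, compare the coefficient of $s^{p-1}$ on the two sides of the differential equation \eqref{eq:goodcoordeq}, and derive $|a_p|\le|ra_p|<|a_p|$). Only after this collapse does the formal computation of Proposition~\ref{prop:goodfortor} apply verbatim. Your proposal contains no argument for this vanishing (nor, in your ``handle both signs directly'' variant, for the vanishing of all negative-index coefficients other than the one in degree $1-n$ carrying $\mu$).

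A second, smaller omission: for $G_n(\ko)$ to act on the set of good coordinates you also need the converse direction, namely that for every $\sigma\in G_n(\ko)$ the series $\sigma(s)$ produced by the recursion actually converges on $\CA$ and is again a good coordinate; the paper checks this by observing that all of its coefficients lie in $\ko$ with the degree-one coefficient a unit. Your exactness ``master equation'' $\frac{c}{n}t^n-\frac{e}{n}s^n+r\log(t/s)=\mathrm{const}$ is a promising alternative starting point --- one could plausibly solve it by a contraction-mapping iteration in the spirit of Proposition~\ref{goodprop} and read off the two free parameters directly, which would bypass the coefficientwise analysis --- but as written you fall back on the formal recursion instead of carrying this out, so the gap remains.
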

\begin{proof}
If $n=0$, and $t,s$ are good formal coordinates compatible with the orientation then there exists a unit $u=u(s)$ such that $t=su(s)$. Since $\frac{dt}{t}=\frac{ds}{s}$, it follows that $\frac{du}{u}=0$, and hence $u\in (\ko)^\times$. Thus, the natural action of $\GG_m(\ko)$ on the set of good formal coordinates compatible with the orientation is free and transitive as asserted. Assume now that $n\ne 0$. Since for any $c\in k$ such that $|c|=\rho$ the map $s\mapsto cs^{-1}$ induces a bijection between the sets of good formal coordinates compatible with the orientation and not compatible with the orientation, it is sufficient to consider the case $n<0$. Set $l:=-n$.

Let $s,t=\sum_{i\in\ZZ}a_is^i$ be good formal coordinates compatible with the orientation. Then $|a_1|=1$, and $|a_ks^k|_x<|a_1s|_x$ for all $x\in\CA$. In particular, $|a_k|\le|a_1|$ for all $k$. It is sufficient to show that $a_i=0$ for all $i\le 0$. Indeed, in this case, the formal computation we did in the proof of Proposition~\ref{prop:goodfortor} shows that $\sum_{i\in\ZZ}a_is^i$ belongs to the $G_n(k)$ orbit of $s$. Furthermore, since $|a_1|=1$ and $|a_{l+1}|\le 1$, it follows that $\sum_{i\in\ZZ}a_is^i$ belongs to the $G_n(\ko)$ orbit of $s$. Vice versa, for any $\sigma\in G_n(\ko)$, let $\sigma(s)$ be the formal power series as in the proof of Proposition~\ref{prop:goodfortor}. It is easy to verify that all the coefficients of $\sigma(s)$ have absolute value at most $1$, and the coefficient of $s^1$ has absolute value $1$. Thus, $\sigma(s)$ converges on $\CA$ and has unique dominating monomial of degree $1$, i.e., it is a good coordinate on $\CA$ compatible with the orientation.

Let us show that $a_i=0$ for all $i\le 0$. Assume to the contrary that there exists $i\le 0$ such that $a_i\ne 0$. After shrinking $\CA$ if needed we may assume that there exists $p\le 0$ such that $|a_p|>|a_k|$ for all $p\ne k\le 0$. Since $t$ and $s$ are good, there exist $c\ne 0,e\ne 0,$ and $r$ in $k$ such that $\omega=(ct^n+r)\frac{dt}{t}=(es^n+r)\frac{ds}{s}$, and $|r|\le |c|=|e|$. After shrinking $\CA$ further we may assume that $|r|<|c|=|e|$. Finally, after replacing $t,s,$ and $\omega$ with appropriate multiples, we may assume that $c=e=1$. Thus,
$(1+rt^l)\frac{dt}{ds}=u^{l+1}(1+rs^{l})$, where $u(s)=\frac{t}{s}=\sum_{i\in\ZZ}ia_is^{i-1}$. Equivalently,
\begin{equation}\label{eq:goodcoordeq}
\frac{dt}{ds}-u^{l+1}=rs^lu^l\left[u-\frac{dt}{ds}\right].
\end{equation}
Comparing the absolute value of the free coefficients in \eqref{eq:goodcoordeq}, we conclude that $|a_1-a_1^{l+1}|$, and hence
\begin{equation}\label{eq:a1ismall}
|a_1^l-1|<1.
\end{equation}

Let us now compare the coefficient of $s^{p-1}$ in \eqref{eq:goodcoordeq}. Since $p-1<0$, $|a_1|=1\ge |a_k|$ for all $k$, and $|a_p|>|a_k|$ for all $p\ne k\le 0$, it follows that the absolute value of the coefficient of $s^{p-1}$ on the left hand side of \eqref{eq:goodcoordeq} is
$|pa_p-(l+1)a_1^la_p+\epsilon|,$
where $|\epsilon|<|a_p|$. Thus, by \eqref{eq:a1ismall}, the absolute value of the coefficient of $s^{p-1}$ on the left hand side of \eqref{eq:goodcoordeq} is
$$|(p-l-1)a_p|=|a_p|.$$
Similarly, the absolute value of the coefficient of $s^{p-1-l}$ in $u^l(u-\frac{dt}{ds})$ is at most $|a_p|$, since $p-1-l<0$, $|a_1|=1$ is maximal, and $a_p$ has maximal absolute value among the coefficients of the negative powers of $s$ in $u$ and $\frac{dt}{ds}$. Therefore, the absolute value of the coefficient of $s^{p-1}$ on the right hand side of \eqref{eq:goodcoordeq} is at most
$$|ra_p|<|a_p|,$$
which is a contradiction.
\end{proof}


Using good coordinates on an annulus $\CA_e$ with skeleton $e$, we can relate good formal coordinates at $p_e$ and $p_{e^\op}$. This is based on the following reduction construction assigning to a coordinate $t$ on $\CA_e$ compatible with the orientation a formal graded reduction $\tilt_x$ at $x=\ft(e)$. Choose a function $f$ defined in a neighborhood of $x$ such that $|f|_x=1$ and $\tilf$ has a simple zero at $p_e$. Then $f$ is a coordinate on the part of $\CA$ close enough to $x$ and $t=\sum_{-\infty}^\infty a_if^i$, where $|a_1|=\max_i|a_i|$ and $|a_i|<|a_1|$ for $i<1$. Set $b_i:=a_i/a_1$. Then it is easy to see that the reduction $$\tilt_x:=\left(\sum_{i=1}^\infty \widetilde{b_i}\tilf^i\right)\otimes\tila_1\in\widehat{\CO}_{C_x,p_e}\otimes_\tilk k^\gr$$ is independent of the choice of $f$.

Let now $X$ be a nice $k$-analytic curve equipped with a differential form $\omega$, and $\Gamma$ a skeleton of $X$ containing all zeros and poles of $\omega$. Let $e\in E(\Gamma)$ be a bounded oriented edge, and $x:=\ft(e)$, $y:=\fh(e)$ the corresponding points of type $2$. Set $n$ to be minus the slope of the level function along $e$. Consider the quotient $\CT_e$ of the torsor of good formal coordinates on $C_x$ at $p_e$ by the normal unipotent subgroup $U_n(\tilk)\lhd G_n(\tilk)$. If $n\ge 0$ then $U_n(\tilk)=0$, and otherwise $U_n(\tilk)=\tilk$. Then for any oriented edge $e$, the set $\CT_e$ is a $\tilk^\times$-torsor.

For any $r\in|k^\times|$, let $k_r\subset k$ is the subset of elements of norm $r$. The $k_r$ is a torsor under the natural action of the multiplicative subgroup $k_1\subset k^\times$, and we define $\CT_e^r$ to be the quotient of $k_r\times\CT_e$ by the anti-diagonal action $g(x,y):=(gx,g^{-1}y)$ of $k_1$. Set $\CT_e^\gr$ to be the disjoint union of $\CT_e^r$. Then $\CT_e^\gr$ is a graded torsor under the action of $(k^\times)^\gr$. We claim that the differential form $\omega$ induces an isomorphism $$\phi_e\:\CT_e^\gr\to\CT_{e^\op}^\gr$$ of graded $(k^\times)^\gr$-torsor with respect to the involution $(k^\times)^\gr\to (k^\times)^\gr$ given by $\lambda\mapsto\lambda^{-1}$, i.e., $\phi_e(\lambda t)=\lambda^{-1}\phi_e(t)$. Furthermore, $\phi_e$ (multiplicatively) shifts the grading by $\frac{\|\omega\|_y}{\|\omega\|_x}$, and $\phi_{e^\op}=\phi_e^{-1}$. This is the stacky reduction datum that should be added to the tropical reduction datum of $(X,\omega)$. It seems to be the algebraic analog of {\em prong matching} of \cite{BCGGM2}.

To describe $\phi_e$, consider the open annulus $\CA_e$, whose skeleton is supported on $e$, and pick a coordinate $s$ on $\CA_e$ compatible with the orientation of $e$. By  Proposition~\ref{goodprop}, there exists a good coordinate $t$ on $\CA_e$, and its graded reduction belongs to $\CT_e^\gr$. Furthermore, since $k^\times$ acts on the set of good coordinates on $\CA_e$, any graded element in $\CT_e^\gr$ is the graded reduction of a good coordinate on $\CA_e$. The map $\phi_e$ is then defined as follows: take a graded element $t_x\in\CT_e^\gr$ and lift it to a good coordinate $t$ on $\CA_e$. Then, $t^{-1}$ is a good coordinate on the oriented open annulus $\CA_{e^\op}$, and $\phi_e(t_x)$ is defined to be the graded reduction $\tilt_y$ of $t$ at $y$. Since the set of good coordinates compatible with the orientation on $\CA$ admits a natural transitive action of $G_n(\ko)$, it follows that $\phi_e(t_x)$ is well-defined. It is also clear from the construction that $\phi_e$ satisfies all the properties mentioned above.

\bibliographystyle{amsalpha}
\bibliography{biblio}
\end{document}